\newtheorem{thm}{Theorem}[section]
\newtheorem{prop}[thm]{Proposition}
\newtheorem{lem}[thm]{Lemma}
\theoremstyle{definition}
\newtheorem{exmp}[thm]{Example}
\theoremstyle{remark}
\newtheorem{rem}[thm]{Remark}
\providecommand{\lb}{\left}
\providecommand{\rb}{\right}
\providecommand{\RR}{\mathbb{R}}
\providecommand{\ZZ}{\mathbb{Z}}
\providecommand{\CC}{\mathbb{C}}
\providecommand{\FF}{\mathbb{F}}
\providecommand{\Fmp}{\mathbb{F}_p^{\times}}
\providecommand{\PP}{\mathbb{P}}
\providecommand{\HH}{\mathbb{H}}
\providecommand{\ep}{\varepsilon}
\providecommand{\bk}{\backslash}
\providecommand{\cset}{\mathbbm{1}}
\DeclareMathOperator{\Gal}{Gal}
\DeclareMathOperator{\Res}{Res}
\DeclareMathOperator{\Tr}{Tr}
\DeclareMathOperator{\Det}{Det}
\DeclareMathOperator{\Ind}{Ind}
\DeclareMathOperator{\GL}{GL}
\DeclareMathOperator{\SL}{SL}
\DeclareMathOperator{\SO}{SO}
\DeclareMathOperator{\Aff}{Aff}
\DeclareMathOperator{\stab}{stab}
\DeclareMathOperator{\End}{End}
\DeclareMathOperator{\Nm}{Nm}
\DeclareSymbolFont{cyrletters}{OT2}{wncyr}{m}{n}
\DeclareMathSymbol{\sha}{\mathalpha}{cyrletters}{"58}
\let\c@equation\c@thm
\numberwithin{equation}{section}
\title{A Selberg Trace Formula for $\text{GL}_{3}(\mathbb{F}_p)\backslash \text{GL}_{3}(\mathbb{F}_q)  /K$}
\begin{document}


\author{Daksh Aggarwal}
\address{
Department of Mathematics \\ 
Brown University\\ 
Providence, RI\\
USA \\
02912} 
\email{daksh\_aggarwal@brown.edu}

\author{Asghar Ghorbanpour}
\address{
Department of Mathematics \\ 
University of Western Ontario\\ 
London, ON\\
Canada \\
N6A 5B7} 
\email{aghorba@uwo.ca}

\author{Masoud Khalkhali}
\address{
Department of Mathematics \\ 
University of Western Ontario\\ 
London, ON\\
Canada \\
N6A 5B7} 
\email{masoud@uwo.ca}

\author{Jiyuan Lu}
\address{
Department of Mathematics \\ 
University of Toronto\\ 
Toronto, ON\\
Canada \\
M5S 2E4} 
\email{jiyuan.lu@mail.utoronto.ca}

\author{Bal\'azs N\'emeth}
\address{
DAMTP\\ 
University of Cambridge\\ 
Cambridge\\
UK\\
CB3 0WA} 
\email{bn273@cam.ac.uk}

\author{C Shijia Yu}
\address{
Department of Mathematics \\ 
University of Toronto\\ 
Toronto, ON\\
Canada \\
M5S 2E4} 
\email{cyu@math.toronto.edu}

\maketitle

\begin{abstract}
  In this paper, we prove a   discrete analog of the Selberg Trace Formula for the group $\text{GL}_{3}(\mathbb{F}_q).$ By considering a cubic extension of the finite field $\mathbb{F}_q$, we define an analog of the upper half space  and an action of $\text{GL}_{3}(\mathbb{F}_q)$  on it. To compute the orbital sums we explicitly identify the double coset spaces and fundamental domains in our upper half space. To understand the spectral side of the trace formula we decompose the induced representation $\rho = \text{Ind}_{\Gamma}^{G} 1$ for $G=  \text{GL}_{3}(\mathbb{F}_q) $ and $ \Gamma = \text{GL}_{3}(\mathbb{F}_p).$
\end{abstract}

\tableofcontents
\section{Introduction}

The Selberg trace     formula \cite{SelbergTraceFormula1956} is one  of the most celebrated  mathematical results  of the past century. In its original form, it relates the spectrum of the Laplacian on a  hyperbolic surface to lengths of closed geodesics on the surface. It can be viewed as a non-abelian generalization of the well-known Poisson summation formula in Fourier analysis and has found numerous applications in many fields, for example in number theory and the theory of automorphic forms (see Sarnak \cite{SARNAK1982229}).

The Selberg trace formula deals with  surfaces which can be represented as  double cosets of the form $\Gamma\bk \SL_2(\RR) /\SO_2(\RR)$, where $\Gamma$ is a discrete  subgroup of $\SL_2(\mathbb{R})$. It is a natural question to work out the trace formula for higher-dimensional matrix groups. To quote Dennis Hejhal: ``Of course, we won't really understand the trace formula until it is written down for $\SL_4(\ZZ)$.'' Dorothy Wallace \cite{wallaceSelbergTraceFormula1994} has worked out the explicit details of the trace formula for $\SL_3(\ZZ)\bk \SL_3(\RR)/\SO_3(\RR)$ but the four-dimensional case still remains mysterious. A further generalization is the so-called Arthur-Selberg trace formula \cite{arthurIntroductionTraceFormula}, which plays an important role in the Langlands program.

Selberg type trace formulae have been derived for discrete spaces like graphs as well. By realizing a $k$-regular graph as a quotient of the infinite $k$-regular tree, Ahumada \cite{ahumadaTraceFormula1987} found such a formula for $k$-regular graphs. Audrey Terras \cite{terras_1999} introduced the concept of finite upper half-planes as a finite analogue of the Poincar\'{e}  upper half-plane model where the complex number field is
replaced by a quadratic extension of a finite field $\FF_q$, where $q=p^r$ is a prime power. She constructed a family of Ramanujan graphs \cite{murtyRamanujanGraphs2003} using these upper-half planes and developed a representation-theoretic trace formula on $\GL_2(\FF_p)\bk \GL_2(\FF_q)/K$, where $K$ is the finite analogue of $\SO_2(\RR)$.

The aim of this paper is to present an analogous trace formula for $\GL_3(\FF_q)$. Our main result in this paper is  as follows.  Consider the  pre-trace formula 
\begin{equation}
  \label{eq:1}
  \sum_{\pi \in \hat{G}} m(\pi,\rho)\Tr(\hat{f}(\pi)) = \sum_{\{\gamma\} \in C_{\Gamma}} \frac{|G_\gamma|}{|\Gamma_\gamma|} I_G(f,\gamma),
\end{equation}
where  $G$ is a finite group,  $\Gamma$ is a subgroup of $G$,  
$\rho = \Ind_{\Gamma}^{G} 1$ is the induced representation of the trivial representation of $\Gamma$,  and $I_G(f,\gamma)$ is the orbital sum of $f$ at $\gamma$. These terms are defined in Section 3 and a proof of the pre-trace formula is recalled  there as well.  Note that (1.1) can be viewed as a representation-theoretic analogue of Selberg's trace formula \cite{SelbergTraceFormula1956,terras_1999}: the left hand side can be thought of as the spectral side, summing over the irreducible representations of $G$ and their multiplicities in $\rho$, while the right hand side can be interpreted as the geometric side, summing over conjugacy classes in $G$.
In this paper, to obtain a  trace formula for the finite upper half-space we take $G = \GL_3(\FF_q)$ and $\Gamma = \GL_3(\FF_p)$. Also, $f:G\to\CC$ is chosen to be a $K-$bi-invariant function ($K$ is the stabilizer of $p_0$ as above), i.e.\ $\forall k,h\in K$ and $\forall x\in G$ we have $f(kxh)=f(x)$. One can also think of $f$ as a function on $G/K\simeq\HH_q$ which is invariant under left multiplication by elements in $K$.

This paper  is organized as follows. In Section 2, we define an analogue, denoted $\HH_q,$ of the upper half plane for the group $\GL_3(\FF_q)$ by considering a cubic extension of the finite field $\FF_q$. We define an action of $\GL_3(\FF_q)$ on $\HH_q$ and this sets the stage for our discrete trace formula. In Section 3, we recall a standard pre-trace formula for finite groups. In Sections 4-6, we compute each term on the right hand side of the pre-trace formula \eqref{eq:4} (the so called geometric side) using the explicit description of conjugacy classes of $\GL_3(\FF_q)$ given in \cite{alaliConjugacyClassesGeneral1994}. The conjugacy classes are separated into central, hyperbolic, parabolic, and elliptic terms. Each of these types of terms are explicitly identified. It would be  helpful to have  a  more conceptual understanding  of  these computations. In Section 7, we give some simple examples of the application of our formula in which the left side of the pre-trace formula is known \textit{a priori}. In Sections 8-9, we calculate the left side of the pre-trace formula by computing the character of the induced representation $\rho = \Ind_{\Gamma}^G 1$ and then using that to compute its decomposition into irreducible representations in the case that $2,3\nmid n$. The calculation of the decomposition in the other cases is similar but involve more cases and hence  they are not  incorporated in  the arXiv version of this paper. They are available on demand. 

\subsection*{Acknowledgments}
\noindent
We would like to thank the Fields Institute for organizing the Fields Undergraduate Summer Research Program in 2021.  Daksh Aggarwal, Jiyuan Lu, Balazs N\'{e}meth,  and C Shijia Yu were participants in this program while working on this paper. The project was proposed and supervised by Masoud Khalkhali and Asghar Ghorbanpour.  

\section{A finite upper half-space}

To generalize Terras's  results to the $\GL_3(\FF_q)$ case, we must first discuss the generalization of the finite upper half-space. Let $p$ be a prime number and $q= p^n$ for   a positive integer $n$. The definition of the upper half space   $\HH_q$ comes from considering the cubic extension of $\FF_q$. The construction was first given by Martinez in \cite{martinezFiniteUpperHalfSpace2000}. Here we describe it explicitly for our special case of 
$\GL_3(\FF_q)$.

\begin{lem}
$\FF_q$ has a cubic nonresidue if and only if $q \equiv 1 \pmod{3}$. 
\end{lem}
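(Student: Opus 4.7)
The plan is to exploit the cyclic structure of $\FF_q^\times$. Since $\FF_q^\times$ is cyclic of order $q-1$, fix a generator $g$, and consider the cubing homomorphism $\phi \colon \FF_q^\times \to \FF_q^\times$, $\phi(x) = x^3$. An element of $\FF_q$ is a cube if and only if it is $0$ or lies in $\im \phi$, so the lemma reduces to deciding when $\phi$ fails to be surjective.

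First I would compute $\ker \phi$: it consists of the cube roots of unity in $\FF_q^\times$, i.e.\ solutions to $x^3 = 1$, which form a subgroup of order $\gcd(3, q-1)$ (this is a standard fact for the $k$-th power map in a cyclic group). By the first isomorphism theorem, $[\FF_q^\times : \im \phi] = \gcd(3, q-1)$.

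Now I would split into the two implications. If $q \equiv 1 \pmod 3$, then $\gcd(3, q-1) = 3$, so $\im \phi$ is a proper subgroup of $\FF_q^\times$; any element outside it (for example the generator $g$, since $g \in \im \phi$ would force $3k \equiv 1 \pmod{q-1}$ for some $k$, which is impossible when $3 \mid q-1$) furnishes a cubic nonresidue. Conversely, if $q \not\equiv 1 \pmod 3$, then since $3$ is prime and $3 \nmid q-1$ we have $\gcd(3, q-1) = 1$, so $\phi$ is injective, hence bijective on the finite group $\FF_q^\times$. Together with $0 = 0^3$, this shows every element of $\FF_q$ is a cube, so no cubic nonresidue exists.

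There is no real obstacle here; the statement is essentially a direct consequence of the structure of cyclic groups. The only point to be careful about is the convention for ``cubic nonresidue'' (which we read as a nonzero non-cube, though $0$ being a cube makes the distinction immaterial), and noting that $3$ being prime is what lets us conclude $\gcd(3,q-1)\in\{1,3\}$ based solely on whether $q \equiv 1 \pmod 3$.
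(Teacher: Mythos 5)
Your proof is correct and follows essentially the same route as the paper: both analyze the cubing homomorphism on the cyclic group $\FF_q^\times$, identify its kernel as the cube roots of unity (of order $\gcd(3,q-1)$), and conclude by comparing the size of the image to $q-1$ according to whether $q\equiv 1 \pmod 3$. The paper writes the kernel out explicitly in terms of a generator $\alpha$ while you phrase it via $\gcd$ and the first isomorphism theorem, but this is a cosmetic difference, not a different argument.
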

\begin{proof}
   Let $q = p^n$. Consider the cube map $x\mapsto x^3: \FF_q^{\times} \to \FF_q^{\times}$. This map has as kernel the subgroup $\mu_3$ consisting of the cube roots of unity in $\FF_q^{\times}$. Let $\alpha$ be a generator of $\FF_q^{\times}$. Then if $q\equiv 1 \pmod{3}$, we have $\mu_3 = \{1,\alpha^{(q-1)/3},\alpha^{2(q-1)/3}\}$, in which case there are $(q-1)/3$ cubic residues.  Otherwise, $\mu_3 = \{1\}$, and so each element of $\FF_{q}^{\times}$ is a cubic residue.
\end{proof}

From now on, we assume that $q \equiv 1 \pmod{3}$. Take a cubic nonresidue $\delta \in \FF_q$ and consider the cubic extension $\FF_q\lb(\sqrt[3]{\delta}\rb) \simeq \FF_{q^3}$. A basis for $\FF_q\lb(\sqrt[3]{\delta}\rb)/\FF_q$ is $\{1,\delta^{1/3},\delta^{2/3}\}$. For an element $\alpha$, we will write $(\alpha_1,\alpha_2,\alpha_3)$ to denote its components, i.e.,
\[\alpha = \alpha_1 + \alpha_2 \delta^{1/3}+\alpha_3\delta^{2/3}.\]  We define 
\[\HH_q = \left\{(\alpha,\beta)\in \FF_q(\sqrt[3]{\delta})\times \FF_q(\sqrt[3]{\delta}):
  \begin{vmatrix}
    \alpha_2 & \alpha_3 \\
    \beta_2 & \beta_3
  \end{vmatrix} \neq 0
  \right\}.\]
\begin{lem}  The following map defines an action of $\GL_3(\FF_q)$ on $\HH_q:$
\[
  \begin{bmatrix}
    a & b & c\\
    d & e & f\\
    r & s & t
  \end{bmatrix} (\alpha,\beta) = \lb(\frac{a\alpha+b\beta+c}{r\alpha+s\beta+t}, \frac{d\alpha+e\beta+f}{r\alpha+s\beta+t} \rb).
\]
\end{lem}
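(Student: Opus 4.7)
The plan is to reinterpret $\HH_q$ and the given formula in terms of a projective action. The crucial observation is that the defining inequality of $\HH_q$ is exactly the condition that $\{1,\alpha,\beta\}$ form an $\FF_q$-basis of $\FF_{q^3}$: writing the coordinates of $1,\alpha,\beta$ in the basis $\{1,\delta^{1/3},\delta^{2/3}\}$ as the rows of a $3\times 3$ matrix produces the determinant $\alpha_2\beta_3-\alpha_3\beta_2$. Under the embedding $(\alpha,\beta)\mapsto [\alpha:\beta:1]\in \PP^2(\FF_{q^3})$, the displayed formula is then the natural action of $\GL_3(\FF_q)\subseteq \GL_3(\FF_{q^3})$ on $\PP^2(\FF_{q^3})$, rescaled so that the last projective coordinate equals $1$. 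It remains to verify three things: the denominator $r\alpha+s\beta+t$ never vanishes on $\HH_q$, the image lies in $\HH_q$, and the group axioms are satisfied.

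For the denominator, suppose $r\alpha+s\beta+t=0$. Expanding in the $\FF_q$-basis $\{1,\delta^{1/3},\delta^{2/3}\}$ and using $r,s,t\in \FF_q$ gives three scalar equations, of which the last two read $r\alpha_2+s\beta_2=0$ and $r\alpha_3+s\beta_3=0$. These say that $(r,s)$ lies in the left kernel of $\bigl(\begin{smallmatrix}\alpha_2 & \alpha_3\\ \beta_2 & \beta_3\end{smallmatrix}\bigr)$, which is invertible by the defining condition of $\HH_q$, so $r=s=0$. The remaining equation then forces $t=0$, contradicting the fact that the last row of the invertible matrix $g$ is nonzero.

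For the closure step, let $\gamma=r\alpha+s\beta+t$ and write $g(\alpha,\beta)=(\alpha',\beta')$. The three elements $\gamma,\,\gamma\alpha',\,\gamma\beta'\in \FF_{q^3}$ are obtained from $(\alpha,\beta,1)$ by an $\FF_q$-linear transformation whose matrix $M$ is $g$ with its rows permuted (moving the last row to the top), so $M\in\GL_3(\FF_q)$. Since $\{1,\alpha,\beta\}$ is an $\FF_q$-basis of $\FF_{q^3}$ by the preceding reinterpretation, its image $\{\gamma,\gamma\alpha',\gamma\beta'\}$ is again an $\FF_q$-basis of $\FF_{q^3}$. Because multiplication by $\gamma^{-1}\in \FF_{q^3}^{\times}$ is an $\FF_q$-linear automorphism of $\FF_{q^3}$, the set $\{1,\alpha',\beta'\}$ is still a basis, and hence $(\alpha',\beta')\in \HH_q$. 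This closure step is the main obstacle, as it is the only place where the defining condition of $\HH_q$ is essentially used.

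Finally, the action axioms follow at once from the projective viewpoint. The identity matrix has last row $(0,0,1)$, so it fixes every $(\alpha,\beta)$; and for $g,h\in \GL_3(\FF_q)$, applying $gh$ to $(\alpha,\beta,1)^T$ and normalizing the last coordinate to $1$ agrees with first applying $h$ and normalizing, then applying $g$ and normalizing again, since the action of $\GL_3(\FF_{q^3})$ on $\PP^2(\FF_{q^3})$ is well-defined and the affine chart $\{z_3=1\}$ supplies consistent representatives whenever $z_3\neq 0$, which is exactly what the first step guarantees.
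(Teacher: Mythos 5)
Your proof is correct and follows essentially the same strategy as the paper's: both reinterpret the defining condition of $\HH_q$ as $\{1,\alpha,\beta\}$ being an $\FF_q$-basis of $\FF_{q^3}$, both deduce the group axioms from the projective action on $\PP^2(\FF_{q^3})$, and both verify closure by showing the image triple $\{1,\alpha',\beta'\}$ remains a basis. Your closure step is phrased via "invertible $\FF_q$-matrix sends a basis to a basis, and multiplication by $\gamma^{-1}$ preserves bases," whereas the paper clears denominators and directly tests a vanishing linear combination, but these are the same argument repackaged.
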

\begin{proof}
 The compatibility of the action with multiplication in $\GL_3(\FF_q)$ follows from the fact that $\GL_3(\FF_q)$ has an action on the projective space $\PP^2(\FF_{q^3})$. So, we need to check that the action is closed in $\mathbb{H}_q$. This is a routine verification, which can be found in \hyperref[appA]{Appendix A}. 
\end{proof}

Our distinguished point will be $p_0 =(\delta^{2/3},\delta^{1/3}) \in \HH_q$. The action of $\Aff_3(\FF_q)\leq\GL_3(\FF_q)$ is transitive on $\HH_q$, where
\[\Aff_3(\FF_q) \coloneqq \lb\{
  \begin{bmatrix}
    a & b & c\\
    d & e & f\\
    0 & 0 & 1
  \end{bmatrix}
\rb\}.\]With this action, we calculate that
\[K \coloneqq \stab(p_0) = \lb\{
  \begin{bmatrix}
    a & c\delta & b\delta\\
    b & a & c\delta\\
    c & b & a
  \end{bmatrix}
\rb\}.\]
Indeed, we find that
\[K \xrightarrow{\sim} \FF_q(\delta^{1/3})^{\times}: \begin{bmatrix}
    a & c\delta & b\delta\\
    b & a & c\delta\\
    c & b & a
  \end{bmatrix} \mapsto a+b \delta^{1/3}+c \delta^{2/3}.\]
Also, we have the homogeneous space $\HH_q \simeq \GL_3(\FF_q)/K$. As a quick  check  note that we have $|\GL_3(\FF_q)| = (q^3-1)(q^3-q)(q^3-q^2)$ and $|K| = q^3-1$ (from the canonical isomorphism), so we should have $|\HH_q|$ equal to 
\[\frac{(q^3-1)(q^3-q)(q^3-q^2)}{q^3-1}=(q^2-1)(q-1)q^3.\]
Indeed, from the definition we have
\[|\HH_q| = |\GL_2(\FF_q)|\cdot q^2 = (q^2-1)(q^2-q)q^2=(q^2-1)(q-1)q^3.\]

\begin{rem}
Observe that the above construction is a natural generalization of the finite upper half-plane defined by Terras \cite{terras_1999} (here $\delta$ is a nonsquare in the finite field $\FF_q$):

$$H_q=\left\{x+y\sqrt{\delta}:x,y\in\FF_q, y\neq 0\right\}.$$
The condition $y\neq 0$ is equivalent to $1$ and $x+y\sqrt{\delta}$ being linearly independent elements of $\FF_q\lb(\sqrt\delta\rb)$ over $\FF_q$.

\end{rem}

\section{Trace formula}

\subsection{The pre-trace formula}

Our trace formula relies on interpretation of a known result, the pre-trace formula. We now derive the pre-trace formula and contextualize it in our setting of the group $G=\GL_3(\FF_q)$. The idea behind the pre-trace formula is to compute the trace of a specific operator in two different bases. We may construct the relevant information for this operator as follows: 

Suppose that $G$ is a finite group and $\Gamma\leq G$ is a subgroup. Let $f:G\to\CC$ be a complex-valued function on $G$. For a representation $\kappa:G\to \GL(V)$ of $G$ (where $V$ is a finite-dimensional complex vector space endowed with a linear $G$-action), define the Fourier transform of $f$ at $\kappa$ by:

$$\hat{f}(\kappa)=\sum_{g\in G}f(g)\kappa\lb(g^{-1}\rb).$$

Observe that the Fourier transform $\hat{f}(\kappa)$ is $\End V$-valued. The pre-trace formula is now obtained by computing the trace of $\hat{f}(\rho)$ in two different bases with $\rho=\Ind_\Gamma^G1$. First, we can decompose $\rho$ as a direct sum of irreducible representations of $G$ to get:

\begin{equation}
  \label{eq:1}
  \Tr\lb(\hat{f}(\rho)\rb)=\sum_{\pi\in\hat{G}}m(\pi,\rho)\Tr\lb(\hat{f}(\pi)\rb).
\end{equation}

Here $\hat{G}$ is the set of irreducible representations of $G$ and $m(\pi,\rho)$ is the multiplicity of the irreducible representation $\pi\in\hat{G}$ in $\rho$. Let\[
V=\{f:G\to\CC\,|\,f(\gamma g)=f(g)\quad\forall \gamma\in\Gamma, g\in G\}=L^2(\Gamma\bk G).
\]As $G$ acts on linearly on $V$ by right multiplication, the corresponding representation is precisely $\rho=\Ind_\Gamma^G1$. More concretely, for $\phi\in V$, $g,x\in G$ we have

$$\lb[\rho(g)\phi\rb](x)=\phi(xg).$$

Thus:

$$\lb[\hat{f}(\rho)\phi\rb](x)=\sum_{y\in G}f(y)\phi\lb(xy^{-1}\rb)=\sum_{u\in G}f\lb(u^{-1}x\rb)\phi(u).$$

Further, as $\phi$ is $\Gamma$-invariant,

$$\lb[\hat{f}(\rho)\phi\rb](x)=\sum_{y\in\Gamma\bk G}\sum_{\gamma \in \Gamma}f\lb(y^{-1}\gamma x\rb)\phi(y).$$

Now we are ready to compute the trace of $\hat{f}(\rho)$ in another way: consider the indicator basis $\lb\{\delta_y\rb\}_{y\in\Gamma\bk G}$ of $V$\footnote{For $y\in\Gamma\bk G$, $\delta_y$ is the function on $\Gamma\bk G$ which outputs $1$ if the input is $y$ and $0$ otherwise.}; with respect to this basis, $\hat{f}(\rho)$ is represented by a matrix where the entry corresponding to $x,y\in\Gamma\bk G$ is $\sum_{\gamma\in\Gamma}f\lb(y^{-1}\gamma x\rb)$. Therefore,

\begin{equation}
  \label{eq:2}
  \Tr\lb[\hat{f}(\rho)\rb]=\sum_{x\in\Gamma\bk G}\sum_{\gamma\in\Gamma}f\lb(x^{-1}\gamma x\rb).
\end{equation}

We can rewrite the right-hand side as a sum over conjugacy classes in $G$. Let:\[
\Gamma_\gamma=\left\{x\in\Gamma\,|\,x^{-1}\gamma x=\gamma\right\}=\text{centralizer of }\gamma \text{ in }\Gamma
\]\[
G_\gamma=\left\{x\in G\,|\,x^{-1}\gamma x=\gamma\right\}=\text{centralizer of }\gamma \text{ in }G
\]\[
\{\gamma\}=\left\{x^{-1}\gamma x\,|\, x\in G\right\}=\text{conjugacy class of }\gamma \text{ in }G
\]\[
C_\Gamma=\text{set of conjugacy classes in }\Gamma.
\]

Note that there is a 1-to-1 correspondence between right cosets $\Gamma_\gamma\backslash\Gamma$ and elements of $\{\gamma\}$ given by $\Gamma_\gamma x\mapsto x^{-1}\gamma x$. Since conjugacy classes partition $\Gamma$ we can rewrite the inner sum as:

$$\sum_{x\in\Gamma\backslash G}\sum_{\gamma\in\Gamma}f(x^{-1}\gamma x)=\sum_{x\in\Gamma\backslash G}\sum_{\{\gamma\}\in C_\Gamma}\sum_{u\in\Gamma_\gamma\backslash\Gamma}f\left(x^{-1}u^{-1}\gamma ux\right).$$

Set $y=ux$, then we can rewrite this as a sum over right cosets $y\in\Gamma_\gamma\backslash G$:

$$\sum_{x\in\Gamma\backslash G}\sum_{\gamma\in\Gamma}f(x^{-1}\gamma x)=\sum_{y\in\Gamma_\gamma\backslash G}\sum_{\{\gamma\}\in C_\Gamma}f(y^{-1}\gamma y).$$

Observe that $\Gamma_\gamma\leq G_\gamma$ hence we can write $y=ts$ for $t\in\Gamma_\gamma\backslash G_\gamma$ and $s\in G_\gamma\backslash G$:

\begin{equation}
  \label{eq:3}
  \sum_{x\in\Gamma\backslash G}\sum_{\gamma\in\Gamma}f(x^{-1}\gamma x)=\sum_{t\in\Gamma_\gamma\backslash G_\gamma}\sum_{s\in G_\gamma\backslash G}\sum_{\{\gamma\}\in C_\Gamma}f\left(s^{-1}t^{-1}\gamma ts\right)=\sum_{\{\gamma\}\in C_\Gamma}\frac{|G_\gamma|}{|\Gamma_\gamma|}\sum_{s\in G_\gamma\backslash G}f(s^{-1}\gamma s).
\end{equation}

Define the orbital sum $I_G(f,\gamma)$ of $f$ at $\gamma$ to be:

$$I_G(f,\gamma)\coloneqq \sum_{s\in G_\gamma\backslash G}f(s^{-1}\gamma s).$$

The pre-trace formula now follows from combining \eqref{eq:1}$-$\eqref{eq:3}:
\begin{equation}
  \label{eq:4}
  \sum_{\pi \in \hat{G}} m(\pi,\rho)\Tr(\hat{f}(\pi)) = \sum_{\{\gamma\} \in C_{\Gamma}} \frac{|G_\gamma|}{|\Gamma_\gamma|} I_G(f,\gamma),
\end{equation}
where $\rho = \Ind_{\Gamma}^{G} 1$ and $I_G(f,\gamma)$ is the orbital sum of $f$ at $\gamma$ as defined above. Note that \eqref{eq:4} can be viewed as a representation-theoretic analogue of Selberg's trace formula \cite{SelbergTraceFormula1956,terras_1999}: the left hand side can be thought of as the spectral side, summing over the irreducible representations of $G$ and their multiplicities in $\rho$, while the right hand side can be interpreted as the geometric side, summing over conjugacy classes in $G$. For further reference see \cite[Ch.\ 22]{terras_1999}.

To obtain the trace formula for the finite upper half-space we take $G = \GL_3(\FF_q)$ and $\Gamma = \GL_3(\FF_p)$. Also, $f:G\to\CC$ is chosen to be a $K-$bi-invariant function ($K$ is the stabilizer of $p_0$ as above), i.e.\ $\forall k,h\in K$ and $\forall x\in G$ we have $f(kxh)=f(x)$. One can also think of $f$ as a function on $G/K\simeq\HH_q$ which is invariant under left multiplication by elements in $K$.

\begin{rem}
    The left-hand side of \eqref{eq:4} can be expressed solely in terms of the characters $\chi_\pi$ of the group $G$:
    \begin{equation*}
    \begin{split}
        \Tr\lb[\hat{f}(\pi)\rb]&=\Tr\lb[\sum_{g\in G}f(g)\pi\lb(g^{-1}\rb)\rb]=\sum_{g\in G}f(g)\Tr\lb[\pi\lb(g^{-1}\rb)\rb]=\\&=\sum_{g\in G}f(g)\chi_\pi\lb(g^{-1}\rb)=\sum_{g\in G}f(g)\overline{\chi_\pi(g)}=\langle f,\chi_\pi\rangle.
    \end{split}
    \end{equation*}
    
    Here $\langle x,y\rangle$ denotes the usual Hermitian inner product on the complex vector space $L^2(G)$:
    
    $$\langle x,y\rangle=\sum_{g\in G}x(g)\overline{y(g)}$$
\end{rem}

We will, in Sections 4-6, compute each term in the sum on the right hand side of the pre-trace formula \eqref{eq:4} using the explicit description of conjugacy classes of $\GL_3(\FF_p)$ given in \cite{alaliConjugacyClassesGeneral1994}. The conjugacy classes are separated into the cases of central, hyperbolic, parabolic, and elliptic terms. It is helpful to introduce one more tool that will help us simplify these computations.

\subsection{Double cosets and fundamental domains}

To calculate the orbital sums it will be convenient to identify $G_\gamma\bk G$ with $G_\gamma\bk G/K\times K/Z$ (when possible), where $Z$ is the center of $G$, the subgroup of diagonal matrices $\{aI:a\in\FF_q^\times\}$. Let $\{t_i\}_{i=1}^l$ be representatives of double cosets in $G_\gamma\bk G/K$ and consider the map:

\begin{equation*}
	\begin{split}
		G_\gamma\bk G/K\times K/Z & \to G_\gamma\bk G\\
		(G_\gamma t_iK,kZ) & \mapsto G_\gamma t_ik.
	\end{split}
\end{equation*}

We first show that this map is well-defined and onto. If $z\in Z$ then $G_\gamma t_ikz=G_\gamma zt_ik=G_\gamma t_ik$ because $z\in G_\gamma$ as $z$ commutes with every element of $G$. Now let $G_\gamma x\in G\bk G_\gamma$, then since double cosets $G_\gamma t_iK$ partition $G$ and they are unions of right cosets $G_\gamma y$ we can find a representative $t_i\in G$ and $k\in K$ such that $G_\gamma x=G_\gamma t_ik$.

As for injectivity, let $t_i,t_j$ be representatives of two double cosets in $G_\gamma\bk G/K$ and $k,k'$ be representatives of two left cosets in $K/Z$. Suppose that $G_\gamma t_ik=G_\gamma t_jk'.$ Since double cosets are either disjoint or identical this is only possible if $t_i=t_j$, let $t=t_i$. So, we have,
 \begin{equation}
  \label{eq:6}
  G_\gamma=G_\gamma tk'k^{-1}t^{-1},
\end{equation}
which happens if and only if $tk'k^{-1}t^{-1}\in G_\gamma$, equivalently, if $k'k^{-1}\in tG_\gamma t^{-1}\cap K$. This can also be stated as $kH=k'H$ where $H=tG_{\gamma}t^{-1}\cap K$. Note that $Z\leq K$ and $Z\leq tG_\gamma t^{-1}=G_{t\gamma t^{-1}}$ as $Z$ is the intersection of all centralizers in $G$, hence we always have $Z\leq K\cap tG_\gamma t^{-1}=H$. If $Z=H$, then \eqref{eq:6} is equivalent to $kZ=k'Z$, which implies that the above map is injective. In our specific example, the centralizers $G_\gamma$ contain no non-central elements conjugate to some matrix in $K$ unless $\gamma$ itself is conjugate to a member of $K$. (This can be seen e.g.\ by taking simple representatives $\gamma$ of conjugacy classes in $G$ and looking at the characteristic polynomials and eigenvalues of elements in $G_\gamma$, which are preserved by conjugation.)

Thus we have two cases:

\textit{Case 1:} $\gamma$ is not conjugate to an element of $K$.   Then we have:

$$I_G(f,\gamma)=\sum_{s\in G_\gamma\bk G}f(s^{-1}\gamma s)=\sum_{t\in G_\gamma\bk G/K}\sum_{k\in K/Z}f((tk)^{-1}\gamma tk)=|K/Z|\sum_{t\in G_\gamma\bk G/K}f(t^{-1}\gamma t).$$

(The last equality follows from the fact that $f$ is $K-$bi-invariant.) We can identify $G_\gamma\bk G/K$ with $G_\gamma\bk \HH_q$. In order to calculate orbital sums we will find fundamental domains for the action of $G_\gamma$ on $\HH_q$, i.e. subsets of $\HH_q$ which contain exactly one element from each $G_\gamma$-orbit.

\textit{Case 2:} $\gamma$ is conjugate to an element of $K$.
In this case, the above map is not necessarily injective. However, $G_\gamma=G\text{ or }K$ depending on whether $\gamma$ is conjugate to a central element of $K$ or not. Then either $G_\gamma\bk G$ is trivial or it can be identified with $\mathbb{H}_q$.

\section{Central and Hyperbolic Terms}

\subsection{Central terms}

Let $\gamma =
  \begin{bmatrix}
    a & 0 & 0\\
    0 & a & 0\\
    0 & 0 & a
  \end{bmatrix}$ for $a\in\FF_p^\times$. In this case, $G_{\gamma} = G$ and $\Gamma_{\gamma} = \Gamma$. So,
  $I_G(f,\gamma) = f(p_0)$ and the total contribution from such $\gamma$ is 
  \[\frac{|G|}{|\Gamma|}f(p_0)\cdot (p-1) = \frac{(q^3-1)(q^2-1)(q-1)q^3}{(p^3-1)(p^2-1)p^3}f(p_0).\]
  
\subsection{Hyperbolic terms of the first kind} 
Let $\gamma =
  \begin{bmatrix}
    a & 0 & 0\\
    0 & a & 0\\
    0 & 0 & b
  \end{bmatrix}, a\neq b\in\FF_p^\times
  .$ Here \[G_\gamma = \lb\{
    \begin{bmatrix}
      x & y & 0 \\
      w & z & 0 \\
      0 & 0 & t
    \end{bmatrix}
    \rb\} \simeq \GL_2(\FF_q) \times \FF_{q}^{\times}\]
  and similarly for $\Gamma_{\gamma}$.

A fundamental domain for the action of $G_{\gamma}$ is given by
  \begin{align}
    \label{eq:5}
    G_\gamma\bk \HH_q = \{(u+\delta^{1/3},v+\delta^{2/3}):u,v\in \FF_q\}.
  \end{align}
  We verify this in the next proposition for the sake of illustration. In the next sections, the verificiation of the fundamental domains is relegated to \hyperref[appA]{Appendix A}.
  \begin{prop}
    A fundamental domain for $G_{\gamma}$ is given by \eqref{eq:5}.
  \end{prop}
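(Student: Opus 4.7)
The plan is to verify existence and uniqueness of representatives in the proposed set. First, I will compute the action of a general element of $G_\gamma$ on a point of $\HH_q$ using the formula from Lemma 2.2. For $M = \begin{bmatrix} x & y & 0\\ w & z & 0\\ 0 & 0 & t \end{bmatrix} \in G_\gamma$ and $(\alpha,\beta) \in \HH_q$, the action reads
\[
M(\alpha,\beta) = \left(\frac{x\alpha + y\beta}{t},\ \frac{w\alpha + z\beta}{t}\right).
\]
The key observation is that because the denominator is the scalar $t$, the action on the $\delta^{1/3}$ and $\delta^{2/3}$ components is just the linear action of the matrix $\frac{1}{t}\begin{pmatrix}x & y\\ w & z\end{pmatrix}$ on the column vector $\begin{pmatrix}\alpha_i\\ \beta_i\end{pmatrix}$ for each fixed $i \in \{1,2,3\}$.

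For existence, given $(\alpha,\beta) \in \HH_q$, I want to find $M \in G_\gamma$ such that the image has $(\alpha'_2,\alpha'_3) = (1,0)$ and $(\beta'_2,\beta'_3) = (0,1)$. Writing this out, I need
\[
\begin{pmatrix} x & y\\ w & z \end{pmatrix} \begin{pmatrix} \alpha_2 & \alpha_3\\ \beta_2 & \beta_3 \end{pmatrix} = t \, I_2.
\]
The defining condition of $\HH_q$ guarantees that the right-hand $2\times 2$ matrix is invertible, so for any choice of $t \in \FF_q^{\times}$ I can solve uniquely for the block $\begin{pmatrix}x & y\\ w & z\end{pmatrix} = t\begin{pmatrix} \alpha_2 & \alpha_3\\ \beta_2 & \beta_3 \end{pmatrix}^{-1}$, and this block has determinant $t^2/(\alpha_2\beta_3 - \alpha_3\beta_2) \neq 0$, so it lies in $\GL_2(\FF_q)$. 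The resulting $u,v \in \FF_q$ are then read off from the first components.

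For uniqueness, I will take two points $(u+\delta^{1/3}, v+\delta^{2/3})$ and $(u'+\delta^{1/3}, v'+\delta^{2/3})$ in the proposed set lying in the same $G_\gamma$-orbit, and specialize the system above with $\alpha_2 = \beta_3 = 1$, $\alpha_3 = \beta_2 = 0$. This immediately forces $x = z = t$ and $y = w = 0$, so $M$ is the scalar matrix $tI_3$, which acts trivially on $\HH_q$. Hence $(u,v) = (u',v')$. The main obstacle is essentially bookkeeping rather than substantive; the only thing to watch is that $t$ is chosen in $\FF_q^{\times}$ and the block lies in $\GL_2(\FF_q)$, both of which follow automatically from the invertibility of the minor built from the $\delta^{1/3}, \delta^{2/3}$ coordinates of $\alpha$ and $\beta$.
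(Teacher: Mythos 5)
Your proof is correct and uses essentially the same idea as the paper: the action of $G_\gamma$ preserves the block structure, acting by $\tfrac{1}{t}\left(\begin{smallmatrix}x&y\\w&z\end{smallmatrix}\right)$ on the coordinate pairs $(\alpha_i,\beta_i)$, and the invertibility of $\left(\begin{smallmatrix}\alpha_2&\alpha_3\\\beta_2&\beta_3\end{smallmatrix}\right)$ is precisely the defining condition of $\HH_q$. The only cosmetic difference is direction: you solve for $M$ carrying $(\alpha,\beta)$ to the normalized representative (so you invert the $2\times 2$ minor), whereas the paper picks $t=1$ and the block equal to that minor directly, carrying the representative to $(\alpha,\beta)$ and only solving a $2\times 2$ linear system for $u,v$.
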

\begin{proof}
First to show the uniqueness of each representative, suppose there exists $m\in G_{\gamma}$ such that
      \[m\cdot (u_1+\delta^{1/3},v_1  + \delta^{2/3}) = (u_2+ \delta^{1/3}, v_2 + \delta^{2/3}).\]
      Then we deduce that $y = 0 = w$, and $x=t=z$. So, $u_1 = u_2$ and $v_1 = v_2$.
      
       Next, to show the completeness of this fundamental domain, take an arbitrary element $(\alpha,\beta) \in \HH_q$. We wish to find $m\in G_{\gamma},u,v\in \FF_q$ such that $m\cdot (u+ \delta^{1/3}, v+\delta^{2/3}) = (\alpha,\beta)$. Set $x = \alpha_2, y = \alpha_3, w = \beta_2,z =\beta_3$, and $t=1$. Then we see $u,v$ is a solution to
       \begin{align*}
          \begin{bmatrix}
        \alpha_2 & \alpha_3\\
        \beta_2 & \beta_3\\
      \end{bmatrix}
         \begin{bmatrix}
           u\\
           v
         \end{bmatrix}
         =
         \begin{bmatrix}
           \alpha_1\\
           \beta_1
         \end{bmatrix},
       \end{align*}
       which has a solution since the left matrix is invertible.
  \end{proof}
  
  Thus,
  \begin{align*}
    I_G(f,\gamma) &= \sum_{t\in G_{\gamma}\bk G/K} \sum_{u\in K/\{aI\}}  f((tu)^{-1}\gamma (tu))\\
                  & = \sum_{t\in G_{\gamma}\bk \HH_q} \frac{|K|}{|\{aI\}|} f(t^{-1}\gamma t)\\
    & = \frac{q^3-1}{q-1}  \sum_{x,y\in \FF_q} f\lb(
      \begin{bmatrix}
        0 & 1 & -y\\
        1 & 0 & -x\\
        0 & 0 & 1
      \end{bmatrix}\gamma
               \begin{bmatrix}
        0 & 1 & x\\
        1 & 0 & y\\
        0 & 0 & 1
      \end{bmatrix} p_0
                \rb) \\
    & = (q^2+q+1) \sum_{x,y\in \FF_q}f\lb( \begin{bmatrix}
        a & 0 & (a-b)y\\
        0 & a & (a-b)x\\
        0 & 0 & b
      \end{bmatrix} p_0\rb) \\
    & = (q^2+q+1) \sum_{x,y\in \FF_q}f\lb( (x+\frac{a}{b}\delta^{2/3}, y+\frac{a}{b}\delta^{1/3})\rb).\\
  \end{align*}
  Let us define the \emph{horocycle transform} $Hf:\GL_2(\FF_q)\to\mathbb{C}$ of $f\in L^2(K\bk G/K)$ by
  \[Hf(\kappa)\coloneqq \sum_{x,y\in\FF_q}\sum_{\xi\in\{\kappa\}}f\lb(\begin{bmatrix}\xi_{11}&\xi_{12}&x\\ \xi_{21}&\xi_{22}&y\\ 0&0&1\end{bmatrix}p_0\rb),\]
  
  where the inner sum runs over $\xi\in\GL_2(\FF_q)$ in the conjugacy class of $\kappa$.
So, this hyperbolic term is equal to 
  \begin{align*}
    & (q^2+q+1) \sum_{\substack{a, b\in \FF_p^{\times}\\a\neq b}} \frac{(q^2-1)(q^2-q)(q-1)}{(p^2-1)(p^2-p)(p-1)}\cdot Hf\lb(\frac{a}{b}I\rb)\\
    & = \frac{(q^3-1)(q^2-1)(q^2-q)}{(p^2-1)(p^2-p)(p-1)}\sum_{\substack{a\in \FF_p^{\times}\\a\neq 1}}(p-1)\cdot Hf(aI)\\
    & =  \frac{(q^3-1)(q^2-1)(q^2-q)}{(p^2-1)(p^2-p)}\sum_{\substack{a\in \FF_p^{\times}\\a\neq 1}}Hf(aI).\\
  \end{align*}
We have shown the full computation for the orbital sums for this case both to demonstrate the computational technique and to motivate and introduce the horocycle transform. For the next cases we omit detailed computations for orbital sums, but included some in \hyperref[appA]{Appendix A} for the curious reader.

 \subsection{Hyperbolic terms of the second kind}
 Let $\gamma =
    \begin{bmatrix}
      a & 0 & 0\\
      0 & b & 0 \\
      0 &  0 & c
    \end{bmatrix}$ with $a,b,c\in\FF_p^\times$ distinct. Here $G_\gamma$ is the set of diagonal matrices and:

\begin{align*}
    G_\gamma\bk \HH_q &= \{(x+\delta^{1/3}+y\delta^{2/3}, r+s\delta^{1/3}+\delta^{2/3}): ys\neq 1\}\sqcup\{(x+\delta^{2/3},r+\delta^{1/3}+s\delta^{2/3}):s\neq 0\}\\
    &\qquad \sqcup\{(x+y\delta^{1/3}+\delta^{2/3},r+\delta^{1/3}):y\neq 0\}\sqcup\{(x+\delta^{2/3},y+\delta^{1/3})\}
\end{align*}
We have\[
I_G(f,\gamma)=(q^2+q+1)\cdot Hf\lb(\begin{bmatrix}
 a/c & 0\\
 0 & b/c
\end{bmatrix}\rb),
\] where $a,b,c\in \FF_p^{\times}$ comes from a change of variables outlined in \hyperref[appA]{Appendix A}.

The  total contribution from the second hyperbolic term is:
  $$\frac{1}{6}(q^2+q+1) \frac{(q-1)^3}{(p-1)^3}\sum_{\substack{a,b,c\in \FF_p^{\times}\\a\neq b\neq c}}Hf\lb(\begin{bmatrix}
  a/c&0\\0&b/c\end{bmatrix}\rb)=\frac16(q^2+q+1)\frac{(q-1)^3}{(p-1)^2}\sum_{\substack{a,b\in\FF_p^\times\backslash\{1\}\\a\neq b}}Hf\lb(\begin{bmatrix}
  a&0\\0&b\end{bmatrix}\rb)=$$
  
  $$=\frac13(q^3-1)\frac{(q-1)^2}{(p-1)^2}\sum_{\substack{\{a,b\}\subseteq\FF_p^\times\backslash\{1\}}}Hf\lb(\begin{bmatrix}
  a&0\\0&b\end{bmatrix}\rb).$$

\section{Parabolic Terms}

\subsection{Parabolic terms of the first kind}
Let $\gamma =
    \begin{bmatrix}
      a & 0 & a\\
      0 & a & 0 \\
      0 & 0 & a
    \end{bmatrix}, a \in \FF_p^{\times}$. In this case,
    \[G_\gamma = \lb\{
      \begin{bmatrix}
        d & y & x\\
        0 & c & b\\
        0 & 0 & d
      \end{bmatrix}
      :c,d\in\FF_q^\times,b,x,y\in\FF_q\rb\}.\]
    \begin{prop}\label{lemma:G-fundamental-domain}
      A fundamental domain for $G_{\gamma}$ is
       \[G_{\gamma} \bk \HH_q= \{(u\delta^{1/3},v\delta^{1/3}+\delta^{2/3}): u,v\in \FF_q, u\neq 0\} \sqcup  \{(\delta^{1/3}+ u\delta^{2/3}, \delta^{1/3}): u\in \FF_q^{\times}\} .\]
     \end{prop}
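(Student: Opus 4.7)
The plan is to follow the template of the fundamental-domain proposition for the hyperbolic terms of the first kind: separately verify completeness, that every $(\alpha,\beta)\in\HH_q$ lies in the $G_\gamma$-orbit of some element of the claimed set, and uniqueness, that no two distinct elements of the claimed set are $G_\gamma$-equivalent. The first step is to record the action explicitly: for
\[
m = \begin{bmatrix} d & y & x \\ 0 & c & b \\ 0 & 0 & d \end{bmatrix} \in G_\gamma,
\]
a direct computation gives
\[
m\cdot(\alpha,\beta) = \bigl(\alpha + (y/d)\beta + x/d,\; (c/d)\beta + b/d\bigr),
\]
so the effective parameters are $c/d\in\FF_q^\times$ and $b/d,\,y/d,\,x/d\in\FF_q$. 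Note that $\alpha$ is translated but not rescaled by this action, while $\beta$ can be freely rescaled and shifted.

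For completeness, I would first use $(c/d,b/d)$ to put $\beta$ into canonical form and then use $(y/d,x/d)$ to put $\alpha$ into canonical form. If $\beta_3\neq 0$, taking $c/d=1/\beta_3$ and $b/d=-\beta_1/\beta_3$ normalizes $\beta$ to $(\beta_2/\beta_3)\delta^{1/3}+\delta^{2/3}$; if $\beta_3=0$, the nondegeneracy condition $\alpha_2\beta_3-\alpha_3\beta_2\neq 0$ forces $\beta_2\neq 0$, and $c/d=1/\beta_2$, $b/d=-\beta_1/\beta_2$ normalizes $\beta$ to $\delta^{1/3}$. In the first case, choosing $y/d=-\alpha_3/\beta_3$ kills the $\delta^{2/3}$-coefficient of $\alpha$ and a suitable $x/d$ kills the constant term; the surviving $\delta^{1/3}$-coefficient $(\alpha_2\beta_3-\alpha_3\beta_2)/\beta_3$ is nonzero, placing the image in the first piece. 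In the second case, choose $y/d$ to set the $\delta^{1/3}$-coefficient of $\alpha$ to $1$ and $x/d$ to kill the constant term; the $\delta^{2/3}$-coefficient $\alpha_3$ is automatically nonzero by nondegeneracy, placing the image in the second piece.

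For uniqueness, suppose $m\cdot(\alpha_1,\beta_1)=(\alpha_2,\beta_2)$ with both sides in the claimed set, and compare coefficients in the basis $\{1,\delta^{1/3},\delta^{2/3}\}$ on each side. If both representatives lie in the first piece, matching the $\delta^{2/3}$- and $\delta^{1/3}$-components of $\beta_2=(c/d)\beta_1+b/d$ forces $c=d$ and equality of the $v$-parameters; comparison of the three coefficients of $\alpha$ then forces $y=x=0$ and equality of the $u$-parameters. The analogous argument handles the case where both representatives lie in the second piece. The main obstacle is the mixed case, where one representative is of the first type and the other of the second: here comparing the $\delta^{2/3}$-coefficients of the two sides of $\beta_2=(c/d)\beta_1+b/d$ produces either $0=c/d$ or $1=0$ depending on the direction, both contradicting $c\in\FF_q^\times$. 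This component-matching is the key structural observation that ensures the two pieces of the claimed fundamental domain consist of disjoint orbits; the remaining verification is routine bookkeeping, longer than in the hyperbolic case only because $G_\gamma$ now has more free parameters.
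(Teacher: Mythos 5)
Your proof is correct and takes essentially the same approach as the paper's: check uniqueness by comparing coefficients in the basis $\{1,\delta^{1/3},\delta^{2/3}\}$, and check completeness by exhibiting an explicit element of $G_\gamma$ for each $(\alpha,\beta)\in\HH_q$. The only superficial difference is direction: you normalize $(\alpha,\beta)$ forward into the claimed domain using the factored action $\alpha\mapsto\alpha+(y/d)\beta+x/d$, $\beta\mapsto(c/d)\beta+b/d$, while the paper writes down the matrix sending a representative to $(\alpha,\beta)$; the two are trivially equivalent.
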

     \begin{proof}
       See \hyperref[appA]{Appendix A}.
     \end{proof}
   
    


   Following the computations outlined in \hyperref[appA]{Appendix A}, the orbital sum in this case is
     \[I_G(f,\gamma)=(q^2+q+1)\lb(Hf(I)-f(p_0)\rb).\]


And in total, the contribution from the parabolic terms of the first kind is
\[\frac{q^3(q-1)^2}{p^3(p-1)^2}(q^2+q+1)\lb(Hf(I)-f(p_0)\rb)\cdot (p-1) = \frac{q^3(q^3-1)(q-1)}{p^3(p-1)}\lb(Hf(I)-f(p_0)\rb).\]

\subsection{Parabolic terms of the second kind}
Let $\gamma =
    \begin{bmatrix}
      a & a & 0\\
      0 & a & a\\
      0 & 0 & a
    \end{bmatrix}, a \in \FF_p^{\times}$. We find that
    \[G_\gamma = B = \lb\{
      \begin{bmatrix}
        z & y & x\\
        0 & c & b\\
        0 & 0 & d
      \end{bmatrix}:d,c,z\in\FF_q^\times,b,x,y\in\FF_q
      \rb\}.\]
    Here $B$ is the Borel subgroup of upper-triangular matrices.
    \begin{prop}\label{lemma:B-fundamental-domain}
      A fundamental domain for $B$ is
       \[B \bk \HH_q= \{(\delta^{1/3},v\delta^{1/3}+\delta^{2/3}): v\in \FF_q\} \sqcup  \{(\delta^{2/3},\delta^{1/3})\} .\]
     \end{prop}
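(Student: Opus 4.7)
The plan is to mirror the approach of Proposition \ref{lemma:G-fundamental-domain} while exploiting the extra flexibility of $B$, in which, compared with the centralizer of the previous subsection, the top-left entry $z$ is now independent of the bottom-right entry $d$. I would work on the second coordinate first: the action of $\begin{bmatrix}z & y & x \\ 0 & c & b \\ 0 & 0 & d\end{bmatrix}$ on $\beta$ is $\beta\mapsto (c\beta+b)/d$, depending only on $b,c,d$. Writing $\beta=\beta_1+\beta_2\delta^{1/3}+\beta_3\delta^{2/3}$, I would split into the cases $\beta_3\neq 0$ and $\beta_3=0$. In the first case, choosing $c/d=1/\beta_3$ and $b/d=-\beta_1/\beta_3$ reduces $\beta$ to $v\delta^{1/3}+\delta^{2/3}$ with $v=\beta_2/\beta_3$; in the second case, the $\HH_q$-condition $\alpha_2\beta_3-\alpha_3\beta_2\neq 0$ forces $\beta_2\neq 0$, and an analogous choice sends $\beta$ to $\delta^{1/3}$.

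Having fixed $\beta$ in one of the canonical forms, I would identify the stabilizer of that $\beta$ inside $B$: in both cases it consists exactly of matrices with $c=d$ and $b=0$, leaving $z,y,x,d$ free. I would then use these four parameters to solve for a matrix producing the desired normalization of $\alpha$ (namely $\delta^{1/3}$ in the first family, $\delta^{2/3}$ in the second). In each instance the resulting linear system reduces to a single equation whose leading coefficient is, up to a non-zero factor, $\alpha_2\beta_3-\alpha_3\beta_2$ or $\alpha_3$, both of which are non-zero by the $\HH_q$-condition; so a suitable matrix always exists.

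For uniqueness I would argue at the level of $\beta$: because $\beta_3' = c\beta_3/d$, the set $\{\beta_3=0\}$ is $B$-invariant, so the two families cannot mix, and within the first family a direct check shows that $\beta_2/\beta_3$ is itself a $B$-invariant, so the $q$ values of $v$ parametrize distinct $B$-orbits. As a sanity check I would verify that the stabilizer in $B$ of each representative equals the center $Z$ (of order $q-1$), yielding in total $(q+1)\cdot |B|/|Z| = (q+1)(q-1)^2 q^3 = (q^2-1)(q-1)q^3 = |\HH_q|$ points, matching the count in Section 2. The main obstacle is purely organizational: once the $\beta$-reduction isolates the subgroup acting on $\alpha$, the $\alpha$-reduction is essentially forced by the non-degeneracy in the definition of $\HH_q$.
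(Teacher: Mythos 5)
Your proof is correct, and the overall strategy matches the paper's — both explicitly construct matrices in $B$ and split on whether $\beta_3$ vanishes. The difference is organizational: the paper directly exhibits, in one step, a matrix carrying a canonical representative to an arbitrary $(\alpha,\beta)$, and for uniqueness simply asserts that the two families cannot be mixed by $B$. Your two-stage reduction (first normalize $\beta$ using the $2\times 2$ lower-right block, then use the $\beta$-stabilizer $\{c=d,\,b=0\}$ to normalize $\alpha$, with the needed nonvanishing coming exactly from the $\HH_q$-condition $\alpha_2\beta_3-\alpha_3\beta_2\neq 0$) is a clean way to make the same construction systematic, and your uniqueness argument via the $B$-invariants $\cset_{\{\beta_3=0\}}$ and $\beta_2/\beta_3$ is tighter than the paper's informal "it is easy to see." The stabilizer/cardinality sanity check $(q+1)\cdot|B|/|Z|=|\HH_q|$ is a nice extra not in the paper. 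One small bookkeeping point: when you say the $\beta$-stabilizer in $B$ is cut out by $c=d$, $b=0$, you are implicitly using that the canonical $\beta$ is not in $\FF_q$ (so $\{1,\beta\}$ is independent over $\FF_q$); that holds here since $(\beta_2,\beta_3)\neq(0,0)$ for any $\beta$ appearing in $\HH_q$, but it is worth stating explicitly.
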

     \begin{proof}
       See \hyperref[appA]{Appendix A}.
     \end{proof}

     We find the orbital sum (see \hyperref[appA]{Appendix A}) is
     \[
I_G(f,\gamma)=(q^2+q+1)\lb(f\lb(\delta^{2/3}+\delta^{1/3},\delta^{1/3}+1\rb)+\sum_{v\in \FF_q} f\lb((1-v)\delta^{2/3}-v^2\delta^{1/3}+1,\delta^{2/3}+(v+1)\delta^{1/3}\rb)\rb).
\]
Note that the orbital sum is independent of $a$, and so we denote each $I_G(f,\gamma)$ by $\tilde{I}_G(f)$. Thus, in total, the contribution of these parabolic terms is \[
\frac{q^3(q-1)^2}{p^3(p-1)^2}\tilde{I}_G(f)\cdot(p-1)=\frac{q^3(q-1)^2}{p^3(p-1)}\tilde{I}_G(f).
\]

\subsection{Parabolic terms of the third kind}
Let $\gamma =  \begin{bmatrix}
      a & 0 & 0\\
      a & a & 0\\
      0 & 0 & b
    \end{bmatrix}$, with $a\neq b\in\FF_p^\times$. Here
 \[G_\gamma = \lb\{
      \begin{bmatrix}
        x & 0 & 0\\
        y & x & 0\\
        0 & 0 & z
      \end{bmatrix}: x,z \in \FF_{q}^{\times}, y\in \FF_q\rb\}.\]

    Again, via a change of variables, we find the orbital sum (see \hyperref[appA]{Appendix A})
    \[
I_G(f,\gamma)=(q^2+q+1)\cdot Hf\lb(\begin{bmatrix}
  a/b&1\\0&a/b
\end{bmatrix}\rb).
\]

The overall contribution of these terms is
\begin{align*}
  \sum_{\substack{a,b\in\FF_p^\times\\a\neq b}}\frac{q(q-1)^2}{p(p-1)^2}(q^2+q+1)\cdot Hf\lb(\begin{bmatrix}
  a/b&1\\0&a/b
\end{bmatrix}\rb)&=\frac{q(q-1)^2}{p(p-1)^2}(q^2+q+1)\sum_{\substack{c\in\FF_p^\times\\ c\neq 1}}(p-1)\cdot Hf\lb(\begin{bmatrix}
  c&1\\0&c
\end{bmatrix}\rb)\\
  & = \frac{q(q-1)(q^3-1)}{p(p-1)}\sum_{\substack{c\in\FF_p^\times\\c\neq 1}}Hf\lb(\begin{bmatrix}
  c&1\\0&c
\end{bmatrix}\rb).
\end{align*}

\section{Elliptic Terms}
\subsection{Elliptic terms of the first kind}
Irreducible $\gamma$. The characteristic polynomial of such a $\gamma$ is irreducible in $\FF_p[t]$. Recall that $q=p^n$. Depending on whether $n\equiv 0\pmod{3}$, there are two cases to consider.
\begin{itemize}
    \item $n\equiv 0 \pmod{3}$. In this case, we have the tower of field extensions $\FF_q/\FF_{p^3}/\FF_p$. So, $\gamma$ is similar to a diagonal matrix in $\GL_3(\FF_q)$. But that is not the case in $\GL_3(\FF_p)$, which is why this case is different from the second hyperbolic term. Therefore $G_{\gamma}$ is the subgroup of diagonal matrices over $\FF_q$ while $\Gamma_{\gamma}$ is $K$ with entries in $\FF_p$, i.e. $|\Gamma_{\gamma}|=p^3-1$. Since $G_{\gamma}$ is the same, we can reuse the fundamental domain for $G_{\gamma}$ from earlier, in the second hyperbolic case.
    
    Suppose $\gamma$ is similar to $\bigg(\begin{smallmatrix}\xi_1&0&0\\0&\xi_2&0\\0&0&\xi_3\\ \end{smallmatrix}\bigg)$ in $\GL_3(\FF_q)$, where $\xi_1,\xi_2,\xi_3\in\FF_{p^3}\bk\FF_p$. Similarly as for the second hyperbolic term the orbital sum will be:
    
    $$I_G(f,\gamma)=|K/Z|\cdot Hf\lb(\begin{bmatrix}\xi_1/\xi_3&0\\0&\xi_2/\xi_3\end{bmatrix}\rb).$$
       
    Since $\xi_1,\xi_2,\xi_3$ are Galois conjugates without loss of generality $\xi_1=\xi_3^p$ and $\xi_2=\xi_3^{p^2}$. The total contribution of the first elliptic terms is therefore:
    
    $$\frac13\cdot\frac{(q-1)^3}{p^3-1}\cdot\frac{q^3-1}{q-1}\sum_{\xi\in\FF_{p^3}^\times\bk\FF_p^\times}Hf\lb(\begin{bmatrix}\xi^{p-1}&0\\0&\xi^{p^2-1}\end{bmatrix}\rb).$$
    
    (The factor of $\frac13$ comes from the fact that each $\gamma$ is counted three times in the sum.)
    
    \item $n\not\equiv 0 \pmod{3}$. In this case, there is no cubic extension intermediate to $\FF_q/\FF_p$, and so $\FF_q(\delta^{1/3})$ is the minimal field (of course, in the sense of containment, not size) over which the characteristic polynomial of $\gamma$ has a root. Suppose an eigenvalue of $\gamma$ is $\alpha_1+\alpha_2\delta^{1/3}+\alpha_3\delta^{2/3}$ with eigenvector $x_1+x_2\delta^{1/3}+x_3\delta^{2/3}$, where $\alpha_i\in \FF_q$ and $x_i \in \FF_q^3$. Then, 
    \begin{align*}
        \gamma x_1 &= \alpha_1 x_1 + \alpha_3\delta x_2 +\alpha_2 \delta x_3,\\
         \gamma x_2 &= \alpha_2 x_1 + \alpha_1 x_2 +\alpha_3 \delta x_3,\\
          \gamma x_3 &= \alpha_3 x_1 + \alpha_2 x_2 +\alpha_1  x_3.
    \end{align*}
    Thus, viewed in $\GL_3(\FF_q),$ $\gamma$ is similar to a matrix in $K$. Conversely, non-diagonal matrices in $K$ give us irreducible elements in $\GL_3(\FF_p)$. Further, in this case $G_{\gamma}$ is precisely $K$, and so $G_{\gamma}\bk G = \HH_q$. After identifying elements $y\in \HH_q$ with elements $y=\begin{bmatrix}
      a&b&c\\d&e&f\\0&0&1
    \end{bmatrix}\in\Aff_3(\FF_q)$ and writing $\gamma=\begin{bmatrix}
    \alpha & \ep\delta & \beta\delta\\
    \beta & \alpha & \ep\delta\\
    \ep & \beta & \alpha
    \end{bmatrix}$, we can compute:
    \begin{align*}
        I_G(f,\gamma)&=\sum_{y\in \HH_q} f(y^{-1}\gamma y)\\
        &= \sum_{a,b,c,d,e,f\in \FF_q} f\lb(\frac{1}{bd-ae}\begin{bmatrix}-e & b & ce-bf\\
        d & -a & af-cd\\
        0 & 0 & bd-ae
        \end{bmatrix}\begin{bmatrix}
    \alpha & \ep\delta & \beta\delta\\
    \beta & \alpha & \ep\delta\\
    \ep & \beta & \alpha
    \end{bmatrix}\begin{bmatrix}
        a & b & c\\
        d & e & f\\
        0 & 0 & 1
    \end{bmatrix} p_0\rb)\\
    &= \sum_{a,b,c,d,e,f\in \FF_q}f\lb(\frac{1}{bd-ae}\begin{bmatrix}
    x_{11} & x_{12} & x_{13}\\
    x_{21} & x_{22} & x_{23}\\
    x_{31} & x_{32} & x_{33}
    \end{bmatrix} p_0\rb),
    \end{align*}
    where the entries of the matrix, by column, are as follows: \begin{itemize}[label=$\diamond$]
        \item $x_{11}=\alpha(bd-ae)+\beta(ab+cde-bdf)+\ep(ace-abf-de\delta)$
        \item $x_{21}=\beta(adf-a^2-cd^2)+\ep(a^2f-acd+d^2\delta)$
        \item $x_{31}=\beta d+\ep a$
        \item $x_{12}=\beta(ce^2-bef+b^2)+\ep(bce-b^2f-e^2\delta)$
        \item $x_{22}=\alpha(bd-ae)+\beta(aef-cde-ab)+\ep(abf-bcd+de\delta)$
        \item $x_{32}=\beta e+\ep b$
        \item $x_{13}=\beta (cef+bc-bf^2-e\delta)$
        \item $x_{23}=\beta(af^2-cdf-ac+d\delta)+\ep(acf-ac^2+df\delta-a\delta)$
        \item $x_{33}=\alpha+\beta f + \ep c$
    \end{itemize}
    Unfortunately, there does not seem to be an obvious way to simplify this expression further. When computing this case for a specific field $\FF_q$, it seems that applying this formula may not be more helpful than directly computing.
\end{itemize}

\subsection{Elliptic terms of the second kind} $\gamma=\begin{bmatrix}
  a & b & 0\\
  c & d & 0\\
  0 & 0 & e
\end{bmatrix}$, where $g=\begin{bmatrix}
  a & b\\
  c & d
\end{bmatrix}$ is irreducible, and $e\in \FF_p^{\times}$. Also, \[
\Gamma_{\gamma}=\lb\{\begin{bmatrix}
  a & b\xi & 0\\
  b & a & 0\\
  0 & 0 & c
\end{bmatrix}:\, c(a^2-b^2\xi)\neq 0\rb\},
\]where $\xi$ is a nonsquare in $\FF_p$.


Once again, depending on whether $n\equiv 0\pmod{2}$, i.e. whether $n$ is even or odd, there are two cases to consider.\begin{itemize}
    \item $n$ is even. In this case, $\FF_p\subset\FF_{p^2}\subseteq\FF_q$ and $\gamma$ is diagonalizable in $\mathrm{GL}_3(\FF_q)$. $G_{\gamma}$ is a subgroup of diagonal matrices over $\FF_q$, so the fundamental domain $G_\gamma\bk \HH_q$ is the same as for the second hyperbolic case.
    
    Suppose $\gamma$ is similar to $\Big(\begin{smallmatrix}\eta_1&0&0\\0&\eta_2&0\\0&0&e\\\end{smallmatrix}\Big)$ in $G$, where $\eta_1,\eta_2\in\FF_{p^2}\bk\FF_p$ and $e\in\FF_p^\times$. The orbital sum will be (analogously to the second hyperbolic case):
    
    $$I_G(f,\gamma)=|K/Z|\cdot Hf\lb(\begin{bmatrix}\eta_1/e&0\\0&\eta_2/e\end{bmatrix}\rb).$$
    
    As $\eta_1,\eta_2$ are Galois conjugates without loss of generality we may assume that $\eta_1=\eta_2^p$. Therefore the total contribution of the second elliptic terms will be:
    
    $$\frac12\cdot\frac{(q-1)^3}{(p-1)(p^2-1)}\cdot\frac{q^3-1}{q-1}\sum_{e\in\FF_p^\times}\sum_{\eta\in\FF_{p^2}^\times\bk\FF_p^\times}Hf\lb(\begin{bmatrix}
      \eta/e&0\\0&\eta^p/e
    \end{bmatrix}\rb)=$$
    
    $$=\frac12\cdot\frac{(q-1)^3}{(p^2-1)}\cdot\frac{q^3-1}{q-1}\sum_{\eta\in\FF_{p^2}^\times\bk\FF_p^\times}Hf\lb(\begin{bmatrix}
      \eta&0\\0&\eta^p
    \end{bmatrix}\rb).$$
    
    \item $n$ is odd. In this case $\gamma$ would not be diagonalizable in $\mathrm{GL}_3(\FF_q)$. In general, it will be conjugate to an element of the form:
    $$\begin{bmatrix}k&l\xi&0\\l&k&0\\0&0&m\end{bmatrix},$$where $k\in\FF_p$, $m,l,\xi\in\FF_p^\times$ such that $\xi$ is a nonsquare in $\FF_p$. (This ensures that the determinant $m(k^2-l^2\xi)$ is nonzero.) In this case we have:\[
    G_{\gamma} =\lb\{\begin{bmatrix}
  a & b\xi & 0\\
  b & a & 0\\
  0 & 0 & c
\end{bmatrix}:\,a,b,c\in\FF_q,c(a^2-b^2\xi)\neq 0\rb\}.
\]We seek a fundamental domain for this specific $G_{\gamma}$.

\begin{prop}
A fundamental domain for $G_\gamma$ is given by:
\[G_\gamma\bk\HH_q=\lb\{\lb(x+u\delta^{1/3}+v\delta^{2/3},y+\delta^{1/3}\rb):v\in\FF_q^{\times},x,y,u\in\FF_q\rb\}.\]
\end{prop}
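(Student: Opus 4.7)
The plan is the standard two-part verification: first show every $G_\gamma$-orbit meets the claimed set (completeness), then show each orbit meets it in exactly one point (uniqueness). A matrix $m=\bigl[\begin{smallmatrix}a&b\xi&0\\b&a&0\\0&0&c\end{smallmatrix}\bigr]\in G_\gamma$ acts by
\[
m\cdot(\alpha,\beta)=\lb(\frac{a\alpha+b\xi\beta}{c},\ \frac{b\alpha+a\beta}{c}\rb),
\]
and the key arithmetic input, valid because we are in the case $n$ odd, is that $\xi$ remains a nonsquare in $\FF_q$; equivalently, $a^2-b^2\xi=0$ forces $a=b=0$.

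For completeness, given $(\alpha,\beta)\in\HH_q$, I would engineer $m\in G_\gamma$ so that the second coordinate of $m\cdot(\alpha,\beta)$ lies in $\{y+\delta^{1/3}:y\in\FF_q\}$. Reading off the $\delta^{2/3}$- and $\delta^{1/3}$-components this amounts to the two linear equations $b\alpha_3+a\beta_3=0$ and $b\alpha_2+a\beta_2=c$. Since $(\alpha,\beta)\in\HH_q$ gives $\alpha_2\beta_3-\alpha_3\beta_2\neq 0$, at least one of $\alpha_3,\beta_3$ is nonzero. If $\alpha_3\neq 0$, set $a\in\FF_q^\times$ arbitrary, $b=-a\beta_3/\alpha_3$, and $c=-aD/\alpha_3$ where $D=\alpha_2\beta_3-\alpha_3\beta_2$; the nonsquareness of $\xi$ yields $a^2-b^2\xi=a^2(\alpha_3^2-\beta_3^2\xi)/\alpha_3^2\neq 0$, and $c\neq 0$, so $m\in G_\gamma$. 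If $\alpha_3=0$ then $\beta_3\neq 0$ and $\alpha_2\neq 0$, and one takes $a=0$, any $b\in\FF_q^\times$, $c=b\alpha_2$. In either subcase a direct computation of the $\delta^{2/3}$-component of the new first coordinate gives a nonzero value (using $\xi\notin(\FF_q^\times)^2$ again), so the resulting point lies in the claimed fundamental domain.

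For uniqueness, suppose $m\in G_\gamma$ sends a fundamental-domain point $(\alpha,\beta)=(x+u\delta^{1/3}+v\delta^{2/3},\,y+\delta^{1/3})$ with $v\neq 0$ to another such point $(\alpha',\beta')=(x'+u'\delta^{1/3}+v'\delta^{2/3},\,y'+\delta^{1/3})$. Equating the $\delta^{2/3}$-components in $b\alpha+a\beta=c\beta'$ gives $bv=0$, hence $b=0$; equating $\delta^{1/3}$-components then gives $a=c$. Thus $m=aI$, which acts trivially, so $(\alpha',\beta')=(\alpha,\beta)$.

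The only genuinely delicate step is ensuring that after forcing the second coordinate into the required form, the $\delta^{2/3}$-component $v$ of the first coordinate is automatically nonzero; this is where the hypothesis that $\xi$ is a nonsquare in $\FF_q$ (which holds precisely because $n$ is odd) is essential. As a sanity check, the fundamental domain has cardinality $q^3(q-1)$, while $|G_\gamma|=(q-1)(q^2-1)$ and the generic stabilizer is $\{aI:a\in\FF_q^\times\}$ of size $q-1$, giving orbits of size $q^2-1$ and a total of $q^3(q-1)(q^2-1)=|\HH_q|$ points, consistent with the claim.
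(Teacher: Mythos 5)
Your proof is correct and follows essentially the same strategy as the paper's: force the second coordinate into the canonical form $y+\delta^{1/3}$ via a linear system whose solvability comes from the $\HH_q$-nondegeneracy condition $\alpha_2\beta_3-\alpha_3\beta_2\neq 0$, and for uniqueness read off $b=0$ and $a=c$ from the $\delta^{2/3}$- and $\delta^{1/3}$-components so that $m$ is scalar. The paper phrases completeness via the $\Aff_3(\FF_q)$ coset representative of $z\in\HH_q$ and solves the $2\times 2$ system $s\alpha_3+r\beta_3=0$, $s\alpha_2+r\beta_2=1$ uniformly by inverting the nonsingular matrix, avoiding your case split on $\alpha_3\neq 0$, but this is a cosmetic rather than substantive difference.
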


The corresponding orbital sum is

$$I_G(f,\gamma)=\frac{(q^3-1)(q^2-1)}{(p-1)(p^2-1)}Hf\lb(\begin{bmatrix}
  k/m&l\xi/m\\l/m&k/m
\end{bmatrix}\rb).$$

The total contribution from the second elliptic terms in this case is

$$\frac{(q^3-1)(q^2-1)}{(p^2-1)}\sum_{\substack{s\in\FF_p\\t\in\FF_p^\times}}Hf\lb(\begin{bmatrix}
  s&t\xi\\t&s
\end{bmatrix}\rb).$$

A detailed proof of both the proposition and the orbital sum
calculation can be found in \hyperref[appA]{Appendix A}.

\end{itemize}

\section{Examples}

Throughout this section, subscripts under multiplicities indicate which group is the domain of the corresponding representations. Set $\rho=\Ind_\Gamma^G1$ and $\kappa=\Ind_K^G1$.

\begin{exmp}
Let $f:G\to\CC$ be the constant one function. The right-hand side of the trace formula will be $|G|$ while the left-hand side becomes (by orthogonality of characters):

$$\sum_{\pi\in\hat{G}}m(\pi,\rho)_G\langle f,\chi_\pi\rangle=\sum_{\pi\in\hat{G}}m(\pi,\rho)_G\langle \chi_{1},\chi_\pi\rangle=|G|\cdot m(1,\rho)_G.$$
So the trace formula says $|G|\cdot m(1,\rho)=|G|$ which is a manifestation of Frobenius reciprocity: $$m(1,\rho)_G=\frac{1}{|G|}\langle 1,\Ind_\Gamma^G1\rangle_G=\frac{1}{|\Gamma|}\langle \Res^G_\Gamma 1,1\rangle_\Gamma=m(1,1)_\Gamma=1.$$
\end{exmp}

\begin{exmp}
Let $f:\HH_q\to\CC$ be the indicator function $\delta_{p_0}$ which is  $1$ on $p_0$ and zero otherwise. It is $K-$bi-invariant because $\stab(p_0)=K$. On the left-hand side of the trace formula we have:

\begin{equation*}
    \begin{split}
        \sum_{\pi\in\hat{G}}m(\pi,\rho)_G\langle f,\chi_\pi\rangle&=\sum_{\pi\in\hat{G}}m(\pi,\rho)_G\sum_{g\in G}f(g\cdot p_0)\chi_\pi\lb(g^{-1}\rb)=
        \sum_{\pi\in\hat{G}}m(\pi,\rho)_G\sum_{k\in K}\chi_\pi\lb(k^{-1}\rb)=\\
        &=\sum_{\pi\in\hat{G}}m(\pi,\rho)_G\langle 1,\chi_\pi\rangle_K=|K|\cdot\sum_{\pi\in\hat{G}}m(\pi,\rho)_G\cdot m(1,\Res^G_K\pi)_K=\\
        &=|K|\cdot\sum_{\pi\in\hat{G}}m(\pi,\rho)_G\cdot m(\pi,\kappa)_G.
    \end{split}
\end{equation*}
On the right-hand side, the only nonzero terms are the central terms and the first elliptic terms when $n$ is not divisible by $3$. If that is the case, we claim that for a non-central $\gamma$ in $K$ we have $I_G(f,\gamma)=3.$ 

Suppose $\gamma$ corresponds to  multiplication by $\xi\in\FF_q\lb(\delta^{1/3}\rb)^\times$ under the isomorphism $K\simeq\FF_q\lb(\delta^{1/3}\rb)^\times$. When viewed as an element of $\GL_3(\FF_q)$, the eigenvalues of $\gamma$ are the Galois conjugates of $\xi$, which are $\xi$, $\xi^q$ and $\xi^{q^2}$ in our setting. This means that the conjugates of $\gamma$ lying in K are precisely $\gamma^q$ and $\gamma^{q^2}$, hence there will be exactly three non-zero terms in each orbital sum $I_G(f,\gamma)$.

The total contribution of the first elliptic terms will be:

$$\frac13\cdot\frac{q^3-1}{p^3-1}\sum_{\substack{\gamma\in K\\\gamma\not\in Z}}I_G(f,\gamma)=\frac{(q^3-1)(q^3-q)}{p^3-1}.$$

Thus we get:

\textit{Case 1:} $n$ is divisible by $3$. Then we have

$$\sum_{\pi\in\hat{G}}m(\pi,\rho)_G\cdot m(\pi,\kappa)_G=\frac{q^3(q^2-1)(q-1)}{p^3(p^2-1)(p^3-1)}.$$

\textit{Case 2:} $n$ is not divisible by $3$. Then we have 

$$\sum_{\pi\in\hat{G}}m(\pi,\rho)_G\cdot m(\pi,\kappa)_G=\frac{q^3(q^2-1)(q-1)}{p^3(p^2-1)(p^3-1)}+\frac{q^3-q}{p^3-1}.$$

\begin{rem}
    Note that the sum on the left-hand side can also be written as:
    
    $$\sum_{\pi\in\hat{G}}m(\pi,\rho)_G\cdot m(\pi,\kappa)_G=\frac{1}{|G|}\langle\chi_\rho,\chi_\kappa\rangle.$$
\end{rem}

\end{exmp}
\section{Character of the representation \texorpdfstring{$\rho = \Ind_{\Gamma}^{G} 1$}{}}

Having effectively computed the right-hand side of the pre-trace formula \eqref{eq:4}, we now turn to the left-hand side. For this, we require the character $\chi_{\rho}$ of $\rho$, which we now calculate. We use the usual formula for the character of an induced representation (see \cite[Ch.\ 16]{terras_1999}):
\[\chi_{\rho}(\gamma) = \frac{1}{|\Gamma|}\sum_{x\in G} \mathbbm{1}_{\Gamma}(x\gamma x^{-1}).\]

We treat each conjugacy class. By $H_{1,q},H_{2,q},P_{1,q},\dots,E_{2,q}$ we denote the size of various types of conjugacy classes in $\GL_3(\FF_q)$. Given a set $S$, we use $\mathbbm{1}_S$ to denote the indicator function for $S$.

\begin{itemize}
\item Central class: $\gamma = \begin{bmatrix}
    a & 0 & 0\\
    0 & a & 0\\
    0 & 0 & a
  \end{bmatrix}, a \in \FF_{q}^{\times}.$ Since $\gamma\in Z(G)$, we have $x\gamma x^{-1} = \gamma$. So, 
  \[\chi_{\rho}(\gamma) = \frac{|G|}{|\Gamma|}\mathbbm{1}_{\FF_p}(a).\]
  \item First hyperbolic class: $\gamma =
  \begin{bmatrix}
    a & 0 & 0\\
    0 & a & 0\\
    0 & 0 & b
  \end{bmatrix}$ with $a\neq b \in \FF_{q}^{\times}.$ Now, $\gamma$ is similar to an element of $\Gamma$ only when $a,b$ are roots of a polynomial over $\FF_p$. This occurs only when $a,b\in \FF_p^{\times}$; for instance, this can be seen by considering the determinant $\Det(\gamma) = a^2b$. Being the constant coefficient of $\text{char}_\gamma \in \FF_{p}[x]$, we have $\Det(\gamma)\in \FF_p$, which implies $\Det(\gamma)$ is invariant under the Frobenius automorphism (as $\Gal(\FF_q/\FF_p)$ is generated by the Frobenius). Since $\FF_q/\FF_p$ is Galois, if $a,b\not\in \FF_{p}$, they must be Galois conjugates, i.e., $a^p = b$ and $b^p = a$. But this forces $a^2b = (a^2b)^p = b^2a,$ i.e., $a=b$, a contradiction. So, by a simple application of the orbit-stabilizer theorem, it follows that
  \[\chi_{\rho}(\gamma) = \frac{|G|}{|\Gamma|} \frac{H_{1,p}}{H_{1,q}}\cset_{\Gamma}(\gamma).\]

  \item Second hyperbolic class: $\gamma =
  \begin{bmatrix}
    a & 0 & 0\\
    0 & b & 0\\
    0 & 0 & c
  \end{bmatrix}$ with $a,b,c\in \FF_q^{\times}$ distinct. There are three cases which allow $\gamma$ to be similar to an element of $\Gamma$: either all three are Galois conjugates in $\FF_{p^{3}}/\FF_p$ (only possible if $3\mid n$), or $a,b$ are Galois conjugates in $\FF_{p^2}/\FF_p$ and $c\in \FF_p$ (only possible if $2\mid n$), or all three belong to $\FF_p$. These cases give rise to the three terms in the sum:
  \[\chi_{\rho}(\gamma) = \frac{|G|}{|\Gamma|\cdot H_{2,q}}\lb(E_{2,p}\cdot \cset_{\FF_{p^3}\setminus\FF_p}(a)\delta_{b}(a^p)\delta_{c}(a^{p^2}) +E_{1,p}\cdot \cset_{\FF_{p^2}\setminus\FF_p}(a)\delta_b(a^p)\cset_{\FF_p}(c)+ H_{2,p}\cdot \cset_{\Gamma}(\gamma)\rb).\]
  
 \item First parabolic class: $\gamma =
  \begin{bmatrix}
    a & 1 & 0\\
    0 & a & 0\\
    0 & 0 & a
  \end{bmatrix}, a\in \FF_{q}^{\times}$. If $a\not \in \FF_p$ was the root of a polynomial over $\FF_p$, its Galois conjugate $a^p \neq a$ would also be a root, which is clearly not the case. So, 
  \[\chi_{\rho}(\gamma) = \frac{|G|}{|\Gamma|}\frac{P_{1,p}}{P_{1,q}}\cset_{\FF_p}(a). \]
  \item Second parabolic class: $\gamma =
  \begin{bmatrix}
    a & 1 & 0\\
    0 & a & 1\\
    0 & 0 & a
  \end{bmatrix}, a \in \FF_{q}^{\times}.$ Similar to the previous class,
  \[\chi_{\rho}(\gamma) = \frac{|G|}{|\Gamma|}\frac{P_{2,p}}{P_{2,q}}\cset_{\FF_p}(a). \]
  \item Third parabolic class: $\gamma =
  \begin{bmatrix}
    a & 1 & 0\\
    0 & a & 0\\
    0 & 0 & b
  \end{bmatrix}$ with $a\neq b \in \FF_{q}^{\times}$. By a similar argument as for the first hyperbolic class, we have 
  \[\chi_{\rho}(g) = \frac{|G|}{|\Gamma|} \frac{P_{3,p}}{P_{3,q}}\cset_{\Gamma}(g).\]

  \item First elliptic class: $\gamma =
  \begin{bmatrix}
    w & 0 & 0\\
    0 & w^{q} & 0\\
    0 & 0 & r
  \end{bmatrix}$ with $w \in \FF_{q^2}\setminus\FF_q$ and $r\in \FF_q^{\times}$. Here $\gamma$ is similar to an element of $\Gamma$ if and only if $w,w^{q}$ are roots of a degree-2 irreducible polynomial over $\FF_p$. However, if $2\mid n$, then $\FF_{p^2}$ is an intermediate extension in $\FF_q/\FF_p$ and so roots of any degree-2 polynomial over $\FF_p$ are included in $\FF_{q}$. Thus, $\chi_{\rho}(\gamma) = 0$ if $n$ is divisible by 2, otherwise
 \[\chi_{\rho}(\gamma) = \frac{|G|}{|\Gamma|}\frac{E_{1,p}}{E_{1,q}}\cset_{\FF_{p^2}\setminus\FF_p}(w)\cset_{\FF_p}(r).\]
  \item Second elliptic class: $\gamma =
  \begin{bmatrix}
    w & 0 & 0\\
    0 & w^{q} & 0\\
    0 & 0 & w^{q^2}
  \end{bmatrix}, w \in \FF_q(\delta^{1/3})\setminus\FF_q$. As for the previous class, $\chi_{\rho}(\gamma) = 0$ if $n$ is divisible by 3, otherwise
  \[\chi_{\rho}(\gamma) = \frac{|G|}{|\Gamma|}\frac{E_{2,p}}{E_{2,q}}\cset_{\FF_{p^3}\setminus\FF_p}(w).\]
  \end{itemize}
  
    \section{Decomposition of $\rho$ when $2,3\nmid n$}
  
    In this section, we compute the decomposition of $\rho$ when neither $2$ nor $3$ divide $n$; thus elliptic elements in $\GL_3(\FF_p)$ remain elliptic in $\GL_3(\FF_q)$.

    We use the character table for $\GL_3(\FF_q)$ in Terras \cite[pp. 383-4]{terras_1999} and Steinberg \cite{steinberg_1951}. As usual, to calculate multiplicity $m(\pi, \rho)$ of an irreducible representation $\pi$ present in $\rho$, we compute the inner product of their respective characters. By $H_1,H_2,P_1\dots,E_2$ we denote the size of different types of conjugacy classes in $\GL_3(\FF_p)$. By $\Nm_{E/F}$, we denote the norm of the field extension $E/F$.
  
  \begin{enumerate}
      \item $\alpha$, a character of $\FF_q^{\times}$. Then, 
      \begin{align*}
          m(\alpha, \rho) &= \frac{1}{|\Gamma|}\Bigg(\sum_{a\in \FF_p^{\times}} \alpha(a^3) + H_1\sum_{\substack{a,b\in \FF_{p}^{\times}\\a\neq b}}\alpha(a^2b) + \frac{H_2}{6}\sum_{\substack{a,b,c\in \FF_p^{\times}\\a,b,c \text{ distinct}}} \alpha(abc) + P_1\sum_{a\in \FF_p^{\times}} \alpha(a^3)+P_2\sum_{a\in \FF_p^{\times}} \alpha(a^3) \\
          &\qquad\qquad + P_3\sum_{\substack{a,b\in \FF_{p}^{\times}\\a\neq b}}\alpha(a^2b)+ \frac{E_1}{2}\sum_{\substack{w\in \FF_{p^2}\setminus\FF_{p}\\r\in \FF_p^{\times}}}\alpha(r \Nm_{\FF_{q^2}/\FF_q}(w)) +\frac{E_2}{3}\sum_{\substack{w\in \FF_{p^3}\setminus\FF_{p}}}\alpha( \Nm_{\FF_{q^3}/\FF_q} (w))\Bigg). 
      \end{align*}
      We can simplify these sums on the basis of three cases:
      
      \begin{itemize}
          \item $\alpha^3\mid_{\FF_p^{\times}} \neq 1$.  Each of the sums is zero and so $m(\alpha, \rho) = 0$.
          \item $\alpha^3\mid_{\FF_p^{\times}} = 1$ but $\alpha\mid_{\FF_p^{\times}} \neq 1$. A short calculation yields
          \[m(\alpha,\rho) = \frac{1}{|\Gamma|}\lb(1-H_1+\frac{H_2}{3} + P_1+P_2-P_3-\frac{E_2}{3}\rb)(p-1) = 0.\]
          \item $\alpha\mid_{\FF_p^{\times}} = 1$. Then simply
      \[m(\alpha,\rho) = \frac{1}{|\Gamma|}\sum_{\{g\}\in C_{\Gamma}} |\{g\}| = 1.\] 
      \end{itemize}
      
      \item $\pi_{\alpha}$, where $\alpha$ is a character of $\FF_q^{\times}$.
        \begin{align*}
          m(\pi_{\alpha}, \rho) &= \frac{1}{|\Gamma|}\Bigg((q^2+q)\sum_{a\in \FF_p^{\times}} \alpha(a^3) + (q+1)H_1\sum_{\substack{a,b\in \FF_{p}^{\times}\\a\neq b}}\alpha(a^2b) + \frac{H_2}{3}\sum_{\substack{a,b,c\in \FF_p^{\times}\\a,b,c \text{ distinct}}} \alpha(abc) + qP_1\sum_{a\in \FF_p^{\times}} \alpha(a^3)\\
          &\qquad\qquad +0+ P_3\sum_{\substack{a,b\in \FF_{p}^{\times}\\a\neq b}}\alpha(a^2b)+ 0 - \frac{E_2}{3}\sum_{\substack{w\in \FF_{p^3}\setminus\FF_{p}}}\alpha( \Nm_{\FF_{q^3}/\FF_q}(w))\Bigg). 
      \end{align*}
      
      \begin{itemize}
          \item $\alpha^3\mid_{\FF_p^{\times}} \neq 1$. Here again all the sums are zero, giving $m(\pi_{\alpha},\rho) = 0$.
          \item $\alpha^3\mid_{\FF_p^{\times}} = 1$ but $\alpha\mid_{\FF_p^{\times}} \neq 1$. 
          \begin{align*}
           m(\pi_{\alpha},\rho) &= \frac{1}{|\Gamma|}\lb((q^2+q)-(q+1)H_1 + \frac{2}{3}H_2 +qP_1-P_3 + \frac{E_2}{3} \rb)(p-1)\\
           &= \frac{(q-p)(q-p^2)}{p^3(p-1)^2(p+1)(p^2+p+1)}.
       \end{align*}
          \item $\alpha\mid_{\FF_p^{\times}} = 1$.  \begin{align*}
          m(\pi_{\alpha}, \rho) &= \frac{1}{|\Gamma|}\Bigg( (q^2+q)(p-1)+(q+1)H_1(p-1)(p-2)+(p-1)(p-2)(p-3) \frac{H_2}{3} \\
          & \qquad\qquad+ q P_1(p-1) + (p-1)(p-2) P_3 -  \frac{E_2}{3} (p^3 - p)\Bigg)\\ 
          & = \frac{(q-p)(q + p^5 -2 p^2)}{p^3(p-1)^2(p+1)(p^2+p+1)}.
      \end{align*}
      As a sanity check, showing this is actually an integer is not very hard.
      \end{itemize}

      \item $\pi_{\alpha}',$ where $\alpha$ is a character of $\FF_q^{\times}$.
       \begin{align*}
          m(\pi_{\alpha}', \rho) &= \frac{1}{|\Gamma|}\Bigg(q^3\sum_{a\in \FF_p^{\times}} \alpha(a^3) + qH_1\sum_{\substack{a,b\in \FF_{p}^{\times}\\a\neq b}}\alpha(a^2b) + \frac{H_2}{6}\sum_{\substack{a,b,c\in \FF_p^{\times}\\a,b,c \text{ distinct}}} \alpha(abc) + 0+ 0+ 0\\
          &\qquad\qquad  - \frac{E_1}{2}\sum_{\substack{w\in \FF_{p^2}\setminus\FF_{p}\\r\in \FF_p^{\times}}}\alpha(r \Nm_{\FF_{q^2}/\FF_q}(w)) + \frac{E_2}{3}\sum_{\substack{w\in \FF_{p^3}\setminus\FF_{p}}}\alpha(\Nm_{\FF_{q^3}/\FF_q}(w))\Bigg). 
      \end{align*}
      
          \begin{itemize}
          \item $\alpha^3\mid_{\FF_p^{\times}} \neq 1$. $m(\pi_{\alpha},\rho) = 0$.
          \item $\alpha^3\mid_{\FF_p^{\times}} = 1$ but $\alpha\mid_{\FF_p^{\times}} \neq 1$. 
          \begin{align*}
         m(\pi_{\alpha}',\rho) = \frac{1}{|\Gamma|}\lb(q^3- qH_1 + \frac{1}{3}H_2 - \frac{E_2}{3} \rb)(p-1) = \frac{(q-p)(q-p^2)(q+p^2+p)}{p^3(p-1)^2(p+1)(p^2+p+1)}.
       \end{align*}
          \item $\alpha\mid_{\FF_p^{\times}} = 1$.   \begin{align*}
          m(\pi_{\alpha}', \rho) &=  \frac{1}{|\Gamma|}\Bigg(q^3(p-1)+q(p-1)(p-2)H_1+(p-1)(p-2)(p-3)\frac{H_2}{6}\\ 
          & \qquad\qquad -(p^2-p)(p-1)\frac{E_1}{2} +(p^3-p)\frac{E_2}{3}\Bigg)\\
          & = \frac{(q - p)(q^2+pq+p^5-p^4-p^3-p^2)}{p^3(p-1)^2(p+1)(p^2+p+1)}.
      \end{align*}
      \end{itemize}

       \item $\pi_{\alpha,\beta}$ for distinct characters $\alpha$ and $\beta$ of $\FF_q^{\times}$.
      \begin{align*}
          m(\pi_{\alpha,\beta}, \rho) &= \frac{1}{|\Gamma|}\Bigg((q^2+q+1)\sum_{a\in \FF_p^{\times}} \alpha(a^2)\beta(a) + H_1\sum_{\substack{a,b\in \FF_{p}^{\times}\\a\neq b}}((q+1)\alpha(ab)\beta(a)+\alpha(a^2)\beta(b))\\
         & \qquad\qquad+ \frac{H_2}{6}\sum_{\substack{a,b,c\in \FF_p^{\times}\\a,b,c \text{ distinct}}} (\beta(a)\alpha(bc)+\beta(b)\alpha(ac)+\beta(c)\alpha(ab)) + (q+1)P_1\sum_{a\in \FF_p^{\times}} \alpha(a^2)\beta(a)\\
                                      &\qquad\qquad+P_2\sum_{a\in \FF_p^{\times}} \alpha(a^2)\beta(a) + P_3\sum_{\substack{a,b\in \FF_{p}^{\times}\\a\neq b}}(\alpha(ab)\beta(a)+\alpha(a^2)\beta(b))\\
        &\qquad\qquad+ \frac{E_1}{2}\sum_{\substack{w\in \FF_{p^2}\setminus\FF_{p}\\r\in \FF_p^{\times}}}\alpha(\Nm_{\FF_{q^2}/\FF_q}(w))\beta(r) +0\Bigg).
      \end{align*}
      \begin{itemize}
          \item $\alpha^2 \beta \mid_{\FF_p^{\times}}\neq 1$. $m(\pi_{\alpha,\beta},\rho) = 0$.
          \item $\alpha^2 \beta \mid_{\Fmp}=1$ but $\alpha^2\mid_{\Fmp}\neq 1$ and $\beta\mid_{\Fmp}\neq 1$. 
          \begin{align*}
         m(\pi_{\alpha,\beta},\rho) &= \frac{1}{|\Gamma|}\Bigg((q^2+q+1)(p-1) - (q+2)(p-1)H_1 + 
      (p-1)H_2 \\
      & \qquad\qquad +(q+1)(p-1)P_1 +(p-1)P_2 -2(p-1)P_3\Bigg) \\
      & = \frac{(q-p)(q-p^2)}{p^3(p-1)^2(p+1)(p^2+p+1)}.
     \end{align*}
     \item $\alpha^2\mid_{\Fmp} = 1$ and $\beta \mid_{\Fmp}=1$ but $\alpha\mid_{\Fmp}\neq 1$.
      \begin{align*}
         m(\pi_{\alpha,\beta},\rho) &= \frac{1}{|\Gamma|}\Bigg((q^2+q+1)(p-1) -(p-1)(q-p+3)H_1-(p-1)(p-3)\frac{H_2}{2}\\
         &\qquad\qquad +(q+1)(p-1)P_1 +(p-1)P_2 + (p-1)(p-3)P_3 - (p-1)^2\frac{E_1}{2}\Bigg) \\
      & = \frac{(q-p)(q-p^2)}{p^3(p-1)^2(p+1)(p^2+p+1)}.
     \end{align*}
    \item $\alpha\mid_{\Fmp} = 1$ and $\beta \mid_{\Fmp}=1$.
    \begin{align*}
     m(\pi_{\alpha,\beta},\rho) &= \frac{1}{|\Gamma|}\Bigg((q^2+q+1)(p-1) + (q+2)(p-1)(p-2)H_1+(p-1)(p-2)(p-3)\frac{H_2}{2}\\
         &\qquad\qquad +(q+1)(p-1)P_1 +(p-1)P_2 + 2(p-1)(p-2)P_3 + (p-1)(p^2-p)\frac{E_1}{2}\Bigg) \\
      &  = \frac{q(q+p^5-2p^2-p) + p^3(p^5-2p^3-p^2+3)}{p^3(p-1)^2(p+1)(p^2+p+1)}.
    \end{align*}
      \end{itemize}
      
      \item $\pi_{\alpha,\beta}'$ for distinct characters $\alpha$ and $\beta$ of $\FF_q^{\times}$. 
     \begin{align*}
          m(\pi_{\alpha,\beta}', \rho) &= \frac{1}{|\Gamma|}\Bigg(q(q^2+q+1)\sum_{a\in \FF_p^{\times}} \alpha(a^2)\beta(a) + H_1\sum_{\substack{a,b\in \FF_{p}^{\times}\\a\neq b}}((q+1)\alpha(ab)\beta(a)+q\alpha(a^2)\beta(b))\\
         & \qquad\qquad+ \frac{H_2}{6}\sum_{\substack{a,b,c\in \FF_p^{\times}\\a,b,c \text{ distinct}}} (\beta(a)\alpha(bc)+\beta(b)\alpha(ac)+\beta(c)\alpha(ab)) + qP_1\sum_{a\in \FF_p^{\times}} \alpha(a^2)\beta(a)\\
         &\qquad\qquad+0 + P_3\sum_{\substack{a,b\in \FF_{p}^{\times}\\a\neq b}}\alpha(ab)\beta(a) - \frac{E_1}{2}\sum_{\substack{w\in \FF_{p^2}\setminus\FF_{p}\\r\in \FF_p^{\times}}}\alpha(\Nm_{\FF_{q^2}/\FF_q}(w))\beta(r) +0\Bigg).
      \end{align*}
     \begin{itemize}
          \item $\alpha^2 \beta \mid_{\FF_p^{\times}}\neq 1$. $m(\pi_{\alpha,\beta}',\rho) = 0$.
          
          \item $\alpha^2 \beta \mid_{\Fmp}=1$ but $\alpha^2\mid_{\Fmp}\neq 1$ and $\beta\mid_{\Fmp}\neq 1$. 
          \begin{align*}
         m(\pi_{\alpha,\beta}',\rho) &= \frac{1}{|\Gamma|}\Bigg(q(q^2+q+1)(p-1) - (2q+1)(p-1)H_1 + 
      (p-1)H_2 \\
      & \qquad\qquad +q(p-1)P_1  -(p-1)P_3\Bigg) \\
      & = \frac{(q - p) (q-p^2 ) (q+ p^2 + p + 1)}{p^3(p-1)^2(p+1)(p^2+p+1)}.
     \end{align*}
     
     \item $\alpha^2\mid_{\Fmp} = 1$ and $\beta \mid_{\Fmp}=1$ but $\alpha\mid_{\Fmp}\neq 1$.
      \begin{align*}
         m(\pi_{\alpha,\beta},\rho) &= \frac{1}{|\Gamma|}\Bigg(q(q^2+q+1)(p-1) +(p-1)(qp-3q-1)H_1-(p-1)(p-3)\frac{H_2}{2}\\
         &\qquad\qquad +q(p-1)P_1 - (p-1)P_3 + (p-1)^2\frac{E_1}{2}\Bigg) \\
      &  = \frac{(q-p)(q(q+p+1)+p^2(p^3-p^2-p-2))}{p^3(p-1)^2(p+1)(p^2+p+1)}.     \end{align*}
     
    \item $\alpha\mid_{\Fmp} = 1$ and $\beta \mid_{\Fmp}=1$.
    \begin{align*}
     m(\pi_{\alpha,\beta},\rho) &= \frac{1}{|\Gamma|}\Bigg(q(q^2+q+1)(p-1) + (2q+1)(p-1)(p-2)H_1+(p-1)(p-2)(p-3)\frac{H_2}{2}\\
         &\qquad\qquad +q(p-1)P_1+ (p-1)(p-2)P_3 - (p-1)(p^2-p)\frac{E_1}{2}\Bigg) \\
      & = \frac{(q - p) (q(q + p + 1) + p^2 (2 p-3) ( p^2+p+1))}{p^3(p-1)^2(p+1)(p^2+p+1)}.
    \end{align*}

      \end{itemize}
      
   \item $\pi_{\alpha,\beta,\gamma}$ for distinct characters $\alpha,\beta,\gamma$ of $\FF_{q}^{\times}$.

         \begin{align*}
          m(\pi_{\alpha,\beta,\gamma}, \rho) &= \frac{1}{|\Gamma|}\Bigg((q+1)(q^2+q+1)\sum_{a\in \FF_p^{\times}} \alpha(a)\beta(a)\gamma(a)+ H_2\sum_{\substack{a,b,c\in \FF_p^{\times}\\a,b,c \text{ distinct}}} \alpha(a)\beta(b)\gamma(c) \\
         & \qquad\qquad + (q+1)H_1\sum_{\substack{a,b\in \FF_{p}^{\times}\\a\neq b}}(\alpha(b)\beta(a)\gamma(a)+\beta(b)\gamma(a)\alpha(a)+\gamma(b)\alpha(a)\beta(a)) \\
         &\qquad\qquad + (2q+1)P_1\sum_{a\in \FF_p^{\times}} \alpha(a)\beta(a)\gamma(a)+ P_2\sum_{a\in \FF_p^{\times}} \alpha(a)\beta(a)\gamma(a)\\
         &\qquad\qquad  + P_3\sum_{\substack{a,b\in \FF_{p}^{\times}\\a\neq b}}(\alpha(b)\beta(a)\gamma(a)+\beta(b)\gamma(a)\alpha(a)+\gamma(b)\alpha(a)\beta(a))+0+0\Bigg).
      \end{align*}
      \begin{itemize}
       \item $\alpha \beta \gamma \mid_{\FF_p^{\times}}\neq 1$. $m(\pi_{\alpha,\beta,\gamma},\rho) = 0$.
          
        \item $\alpha\beta \gamma \mid_{\Fmp}  = 1$ but none of $\alpha,\beta,$ or $\gamma$ restrict to the trivial character on $\Fmp.$
        \begin{align*}
         m(\pi_{\alpha,\beta,\gamma},\rho) & = \frac{1}{|\Gamma|}\Bigg((q+1)(q^2+q+1)(p-1) + 2(p-1)H_2 - 3(q+1)(p-1)H_1 \\
         &\qquad\qquad+ (2q+1)(p-1)P_1 + (p-1)P_2-3(p-1)P_3\Bigg)\\
         & = \frac{(q - p) (q-p^2) (q+p^2+p+2)}{p^3(p-1)^2(p+1)(p^2+p+1)}.
        \end{align*}
        \item $\alpha\beta\mid_{\Fmp}  = 1$ and $\gamma\mid_{\Fmp} = 1$ but $\alpha$ and $\beta$ are nontrivial on $\Fmp$. Note that since $m(\pi_{\alpha,\beta,\gamma},\rho)$ is symmetric in $\alpha,\beta,\gamma$, the calculation here also holds for the other two permutations of this case.
         \begin{align*}
         m(\pi_{\alpha,\beta,\gamma},\rho) & = \frac{1}{|\Gamma|}\Bigg((q+1)(q^2+q+1)(p-1) -(p-1)(p-3)H_2 + (q+1)(p-1)(p-4)H_1 \\
         &\qquad\qquad+ (2q+1)(p-1)P_1 + (p-1)P_2+(p-1)(p-4)P_3\Bigg)\\
         & = \frac{(q-p)(q(q+p+2)+p^2(p^3-p^2-p-3))}{p^3(p-1)^2(p+1)(p^2+p+1)}.
        \end{align*}
        \item All three $\alpha,\beta,\gamma$ restrict to the trivial character on $\Fmp$. 
         \begin{align*}
         m(\pi_{\alpha,\beta,\gamma},\rho) & = \frac{1}{|\Gamma|}\Bigg((q+1)(q^2+q+1)(p-1) +(p-1)(p-2)(p-3)H_2 + 3(q+1)(p-1)(p-2)H_1 \\
         &\qquad\qquad+ (2q+1)(p-1)P_1 + (p-1)P_2+3(p-1)(p-2)P_3\Bigg)\\
         & = \frac{(q(q^2+2q+3p^5-p^4-p^3-6p^2-2p)+p^3(p^5-4p^3+p+6))}{p^3(p-1)^2(p+1)(p^2+p+1)}.
        \end{align*}
      \end{itemize}

      \item $\rho_{\alpha,\nu}$ for $\alpha$ a character of $\FF_q^{\times}$ and $\nu$ a character of $\FF_{q^2}^{\times}$ such that $\nu^{q} \neq \nu$ (i.e., it is nondecomposable).  
        \begin{align*}
          m(\rho_{\alpha,\nu}, \rho) &= \frac{1}{|\Gamma|}\Bigg((q-1)(q^2+q+1)\sum_{a\in \FF_p^{\times}} \alpha(a)\nu(a)+ (q-1)H_1\sum_{\substack{a,b\in \FF_{p}^{\times}\\a\neq b}}\alpha(b)\nu(a) + 0 \\
         &\qquad\qquad-P_1\sum_{a\in \FF_p^{\times}} \alpha(a)\nu(a)- P_2\sum_{a\in \FF_p^{\times}} \alpha(a)\nu(a)- P_3\sum_{\substack{a,b\in \FF_{p}^{\times}\\a\neq b}}\alpha(b)\nu(a)\\
         &\qquad\qquad  -\frac{E_1}{2}\sum_{\substack{w\in \FF_{p^2}\setminus \FF_p\\r\in \Fmp}}\alpha(r)(\nu(w)+\nu(w^p))+0\Bigg).
      \end{align*}
      Note that the last nonzero sum can be written as $-E_1\sum_{\substack{w\in \FF_{p^2}\setminus \FF_p\\r\in \Fmp}}\alpha(r)\nu(w).$ 
      \begin{itemize}
          \item $\alpha\nu \mid_{\FF_p^{\times}}\neq 1$. $m(\rho_{\alpha,\nu},\rho) = 0$.
          \item $\alpha \nu \mid_{\Fmp}=1$ but $\alpha\mid_{\Fmp}\neq 1$ and $\nu\mid_{\Fmp}\neq 1$. 
          \begin{align*}
         m(\rho_{\alpha,\nu},\rho) &= \frac{1}{|\Gamma|}\Bigg((q-1)(q^2+q+1)(p-1) - (q-1)(p-1)H_1\\
      & \qquad\qquad -(p-1)P_1 -(p-1)P_2 +(p-1)P_3\Bigg) \\
      & = \frac{(q - p) (q-p^2) (q + p^2 + p)}{p^3(p-1)^2(p+1)(p^2+p+1)}.
     \end{align*}
    \item $\alpha\mid_{\Fmp} = 1$, $\nu\mid_{\FF_{p^{3}}^{\times}}\neq 1$ but $\nu \mid_{\Fmp}=1$.
    \begin{align*}
     m(\rho_{\alpha,\nu},\rho) &= \frac{1}{|\Gamma|}\Bigg((q-1)(q^2+q+1)(p-1) + (q-1)(p-1)(p-2)H_1\\
         &\qquad\qquad -(p-1)P_1 -(p-1)P_2 - (p-1)(p-2)P_3 + (p-1)^2E_1\Bigg) \\
      & = \frac{(q-p)(q(q+p)+ p^2(p^3-p^2-p-1))}{(p-1)^2p^3(p+1)(p^2+p+1)}.
    \end{align*}
    
    \item $\alpha\mid_{\Fmp} = 1$ and $\nu\mid_{\FF_{p^{3}}^{\times}}= 1$.
    
\begin{align*}
     m(\rho_{\alpha,\nu},\rho) &= \frac{1}{|\Gamma|}\Bigg((q-1)(q^2+q+1)(p-1) + (q-1)(p-1)(p-2)H_1\\
         &\qquad\qquad -(p-1)P_1 -(p-1)P_2 - (p-1)(p-2)P_3 - (p-1)(p^2-p)E_1\Bigg) \\
      & = \frac{q(q^2+p^5-p^4-p^3-2p^2)+p^4(-p^4+2p+1)}{p^3(p-1)^2(p+1)(p^2+p+1)} .
    \end{align*}
    
      \end{itemize}
      
      \item $\sigma_{\mu}$ for $\mu$ a character of $\FF_{q^3}^{\times}$ such that $\mu^q \neq \mu$. 
      
       \begin{align*}
          m(\sigma_{\mu}, \rho) &= \frac{1}{|\Gamma|}\Bigg((q-1)^2(q+1)\sum_{a\in \FF_p^{\times}} \mu(a)+ 0 + 0-(q-1)P_1\sum_{a\in \FF_p^{\times}}\mu(a)+ P_2\sum_{a\in \FF_p^{\times}}\mu(a)+0\\
         &\qquad\qquad  +0+\frac{E_2}{3}\sum_{w\in \FF_{p^3}\setminus \FF_p}(\mu(w)+\mu(w^p)+\mu(w^{p^2}))\Bigg).
      \end{align*}
      \begin{itemize}
          \item $\mu\mid_{\Fmp} \neq 1$. $m(\sigma_{\mu},\rho) = 0$. 
          \item $\mu\mid_{\FF_{p^3}^{\times}} \neq 1$ but $\mu\mid_{\Fmp} = 1$. 
       \begin{align*}
          m(\sigma_{\mu},\rho) &= \frac{1}{|\Gamma|}\Bigg((q-1)^2(q+1)(p-1)-(q-1)(p-1)P_1+(p-1)P_2 - (p-1)E_2\Bigg)\\
          & = \frac{(q-p)(q-p^2)(q+p^2+p-1)}{p^3(p-1)^2(p+1)(p^2+p+1)}.
      \end{align*}
       \item $\mu\mid_{\FF_{p^3}^{\times}} = 1$.
       \begin{align*}
          m(\sigma_{\mu},\rho) &= \frac{1}{|\Gamma|}\Bigg((q-1)^2(q+1)(p-1)-(q-1)(p-1)P_1+(p-1)P_2 + (p^3-p)E_2\Bigg)\\
          & = \frac{q(q^2-q-p^4-p^3+p) + p^4(p^4-p^2+1)}{p^3(p-1)^2(p+1)(p^2+p+1)} .
      \end{align*}
      \end{itemize}
           
 \end{enumerate}


\newpage
\section{Appendix A: Fundamental Domains and Orbital Sums}\label{appA}
In this appendix, we give proofs of fundamental domains as well as some computations of orbital sums.

\begin{itemize}

\item \textbf{Proof of Lemma 2.2}
  \begin{proof}
  Observe that if $\alpha,\beta\in\FF_q\lb(\delta^{1/3}\rb)$ then $(\alpha,\beta)\in\HH_q$ if and only if $\{\alpha,\beta,1\}$ form a basis of $\FF_q\lb(\delta^{1/3}\rb)$ over $\FF_q$.
 
 Suppose $(\alpha,\beta)\in\HH_q$ and $x,y,z\in\FF_q$ are such that:
 
 $$x\cdot\frac{a\alpha+b\beta+c}{r\alpha+s\beta+t}+y\cdot\frac{d\alpha+e\beta+f}{r\alpha+s\beta+t}+z\cdot 1=0.$$
 
 The denominator $r\alpha+s\beta+t$ could only be nonzero if $r=s=t=0$ (as $\alpha,\beta,1$ are linearly independent) but that cannot happen as $\begin{vmatrix}
    a & b & c\\
    d & e & f\\
    r & s & t
  \end{vmatrix}\neq 0$. Thus we find:
  
  $$x(a\alpha+b\beta+c)+y(d\alpha+e\beta+f)+z(r\alpha+s\beta+t)=0$$
  
  In matrix form:
  $$\begin{bmatrix}x & y & z\end{bmatrix}\begin{bmatrix}
    a & b & c\\
    d & e & f\\
    r & s & t
  \end{bmatrix}\begin{bmatrix}\alpha\\ \beta\\ 1\end{bmatrix}=0.$$
This implies that $\begin{bmatrix}x & y & z\end{bmatrix}\begin{bmatrix}
    a & b & c\\
    d & e & f\\
    r & s & t
  \end{bmatrix}=\begin{bmatrix}0&0&0\end{bmatrix}$ because $\alpha,\beta,1$ are linearly independent over $\FF_q$. Since our matrix is nonsingular, it follows that $x=y=z=0$ and $\left\{\frac{a\alpha+b\beta+c}{r\alpha+s\beta+t},\frac{d\alpha+e\beta+f}{r\alpha+s\beta+t},1\right\}$ is a basis of $\FF_q\lb(\delta^{1/3}\rb)$ over $\FF_q$, proving the claim that $\GL_3(\FF_q)$ has an action on $\HH_q$ of the above form.
\end{proof}

    \item \textbf{Second Hyperbolic Term.}
 \begin{align*}
    I_G(f,\gamma) &= \sum_{t\in G_{\gamma}\bk G/K} \sum_{u\in K/\{aI\}}  f((tu)^{-1}\gamma (tu))\\
                  & = \sum_{t\in G_{\gamma}\bk \HH_q} \frac{|K|}{|\{aI\}|} f(t^{-1}\gamma t)\\
    & = \frac{q^3-1}{q-1}\sum_{\substack{x,y,r,s\in \FF_q\\ys\neq 1}} f\lb(
\begin{bmatrix}
 \frac{s}{s y-1} & \frac{1}{1-s y} & \frac{r-s x}{s y-1} \\
 \frac{1}{1-s y} & \frac{y}{s y-1} & \frac{x-r y}{s y-1} \\
 0 & 0 & 1 \\
\end{bmatrix}
\gamma
\begin{bmatrix}
 y & 1 & x \\
 1 & s & r \\
 0 & 0 & 1 \\
\end{bmatrix} p_0
                \rb)+ \\
    & +\frac{q^3-1}{q-1}\sum_{\substack{x,r,s\in\FF_q \\ s\neq 0}}f\lb(\begin{bmatrix}1&0&-x\\-s&1&xs-r\\0&0&1\end{bmatrix}\gamma\begin{bmatrix}1&0&x\\s&1&r\\0&0&1\\ \end{bmatrix}p_0\rb)+\\
    & + \frac{q^3-1}{q-1}\sum_{\substack{x,y,r\in\FF_q \\ y\neq 0}}f\lb(\begin{bmatrix}1&-y&-x+yr\\0&1&-r\\0&0&1\end{bmatrix}\gamma\begin{bmatrix}1&y&x\\0&1&r\\0&0&1\end{bmatrix}p_0\rb)\\
    & + \frac{q^3-1}{q-1}\sum_{\substack{x,r\in\FF_q}}f\lb(\begin{bmatrix}1&0&-x\\0&1&-r\\0&0&1\end{bmatrix}\gamma\begin{bmatrix}1&0&x\\0&1&r\\0&0&1\end{bmatrix}p_0\rb)\\
    & = (q^2+q+1) \sum_{\substack{x,y,r,s\in \FF_q\\ys\neq 1}}f\lb( 
\begin{bmatrix}
 \frac{b-a s y}{1-s y} & \frac{s (a-b)}{s y-1} & \frac{a s x-b r+c (r-s x)}{s y-1} \\
 \frac{y (b-a)}{s y-1} & \frac{a-b s y}{1-s y} & \frac{-a x+b r y+c (x-r y)}{s y-1} \\
 0 & 0 & c \\
      \end{bmatrix} p_0\rb)+\\
&+(q^2+q+1)\sum_{\substack{x,r,s\in\FF_q\\s\neq 0}}f\lb(\begin{bmatrix}a&0&ax-cx\\ bs-as&b&-axs+br+c\left(xs-r\right)\\ 0&0&c\end{bmatrix}p_0\rb)+\\
&+(q^2+q+1)\sum_{\substack{x,y,r\in\FF_q\\y\neq 0}}f\lb(\begin{bmatrix}a&ay-by&ax-byr+c\left(yr-x\right)\\ 0&b&br-cr\\ 0&0&c\end{bmatrix}p_0\rb)+\\
&+(q^2+q+1)\sum_{x,y\in\FF_q}f\lb(\begin{bmatrix}a&0&ax-cx\\ 0&b&br-cr\\ 0&0&c\end{bmatrix}p_0\rb)=(q^2+q+1)(S_1+S_2+S_3+S_4)
\end{align*}
We simplify each of the four terms separately. Observe that the substitution:

$$\begin{bmatrix}\tilde{x}\\\tilde{y}\end{bmatrix}=\begin{bmatrix}\frac{asx-br+c(r-sx)}{sy-1}\\\frac{-ax+bry+c(x-ry)}{sy-1}\end{bmatrix}=\frac{1}{sy-1}\begin{bmatrix}(a-c)s&c-b\\c-a&(b-c)y\end{bmatrix}\begin{bmatrix}x\\r\end{bmatrix}$$

is invertible as $\begin{vmatrix}(a-c)s&c-b\\c-a&(b-c)y\end{vmatrix}=(a-c)(b-c)(sy-1)\neq 0$. Therefore:

$$S_1=\sum_{x,r\in\FF_q}\sum_{\substack{y,s\in\FF_q\\ys\neq 1}}f\lb( 
\begin{bmatrix}
 \frac{b-a s y}{1-s y} & \frac{s (a-b)}{s y-1} & x \\
 \frac{y (b-a)}{s y-1} & \frac{a-b s y}{1-s y} & r \\
 0 & 0 & c \\
      \end{bmatrix} p_0\rb)=\sum_{x,r\in\FF_q}f\lb(\begin{bmatrix}
 b & 0 & x \\
 0 & a & r \\
 0 & 0 & c \\
      \end{bmatrix}p_0\rb)+$$
      
      $$+\sum_{\substack{x,y,r\in\FF_q\\y\neq 0}}f\lb(\begin{bmatrix}b&0&x\\y(a-b)&a&r\\0&0&c\end{bmatrix}p_0\rb)+\sum_{\substack{x,r,s\in\FF_q\\s\neq 0}}f\lb(\begin{bmatrix}b&s(b-a)&x\\0&a&r\\0&0&c\end{bmatrix}p_0\rb)+\sum_{x,r\in\FF_q}\sum_{\substack{y,s\in\FF_q^\times\\ys\neq 1}}f\lb( 
\begin{bmatrix}
 \frac{b-a s y}{1-s y} & \frac{s (a-b)}{s y-1} & x \\
 \frac{y (b-a)}{s y-1} & \frac{a-b s y}{1-s y} & r \\
 0 & 0 & c \\
      \end{bmatrix} p_0\rb)$$

We can make similar changes of variables $\begin{bmatrix}\tilde{x}\\\tilde{r}\end{bmatrix}$ as above and find that:

$$S_2=\sum_{\substack{x,r\in\FF_q\\s\in\FF_q^\times}}f\lb(\begin{bmatrix}a&0&x\\ (b-a)s&b&r\\ 0&0&c\end{bmatrix}p_0\rb)$$

$$S_3=\sum_{\substack{x,r\in\FF_q\\y\in \FF_q^\times}}f\lb(\begin{bmatrix}a&(a-b)y&x\\ 0&b&r\\ 0&0&c\end{bmatrix}p_0\rb)$$

$$S_4=\sum_{x,r\in\FF_q}f\lb(\begin{bmatrix}a&0&x\\ 0&b&r\\ 0&0&c\end{bmatrix}p_0\rb)$$

Putting together all the above we get that $S_1+S_2+S_3+S_4$ is the following sum:

$$S_1+S_2+S_3+S_4=Hf\lb(\begin{bmatrix}
  a/c&0\\0&b/c\end{bmatrix}\rb)$$

This is because each conjugate of $\big[\begin{smallmatrix}a&0\\0&b\end{smallmatrix}\big]$ appears exactly once in the upper $2\times 2$ block in the sum $S_1+S_2+S_3+S_4$.

Thus, the total contribution is:
  $$\frac{1}{6}(q^2+q+1) \frac{(q-1)^3}{(p-1)^3}\sum_{\substack{a,b,c\in \FF_p^{\times}\\a\neq b\neq c}}Hf\lb(\begin{bmatrix}
  a/c&0\\0&b/c\end{bmatrix}\rb)=\frac16(q^2+q+1)\frac{(q-1)^4}{(p-1)^3}\sum_{\substack{a,b\in\FF_p^\times\backslash\{1\}\\a\neq b}}Hf\lb(\begin{bmatrix}
  a&0\\0&b\end{bmatrix}\rb)=$$
  
  $$=\frac13(q^3-1)\frac{(q-1)^3}{(p-1)^3}\sum_{\substack{\{a,b\}\subseteq\FF_p^\times\backslash\{1\}}}Hf\lb(\begin{bmatrix}
  a&0\\0&b\end{bmatrix}\rb)$$

\item{\textbf{Proof of Proposition 5.1}}
    \begin{proof}
First to show the uniqueness of each representative, suppose there exists $m\in G_{\gamma}$ such that
       \[m\cdot (u_1\delta^{1/3},v_1\delta^{1/3}+\delta^{2/3}) = (u_2\delta^{1/3},v_2\delta^{1/3}+\delta^{2/3}).\]
       Then we deduce that $y = 0$ and so $u_1 = u_2$. We also see
       $c=d$, implying $v_1=v_2$. Next, suppose
       \[m\cdot (u_1 \delta^{1/3},v_1 \delta^{1/3}+\delta^{2/3}) =
         (\delta^{1/3}+u_2 \delta^{2/3}, \delta^{1/3}).\]
       But this immediately forces $c = 0$, a contradiction. Similarly,
       \[m\cdot (\delta^{1/3}+ u_1\delta^{2/3}, \delta^{1/3}) =(\delta^{1/3}+ u_2\delta^{2/3}, \delta^{1/3}),\]
       directly implies $u_1=u_2$.
       
      To show the completeness of this fundamental domain, take an arbitrary element $(\alpha,\beta) \in \HH_q$.  If $\beta_3 \neq 0$, then observe that
       \[\begin{bmatrix}
        1 &\alpha_3 & \alpha_1\\
        0 & \beta_3 & \beta_1\\
        0 & 0 & 1
      \end{bmatrix} \lb((\alpha_2-\alpha_3 \frac{\beta_2}{\beta_3})\delta^{1/3}, \frac{\beta_2}{\beta_3}\delta^{1/3}+\delta^{2/3}\rb) = (\alpha,\beta).\]
   Otherwise, we must have $\beta_2 \neq 0$, in which case
    \[\begin{bmatrix}
       1 &\alpha_2-1 & \alpha_1\\
        0 & \beta_2 & \beta_1\\
        0 & 0 & 1
      \end{bmatrix} \lb(\delta^{1/3}+\alpha_3\delta^{2/3}, \delta^{1/3}\rb) = (\alpha,\beta).\]
     \end{proof}

\item \textbf{First Parabolic Term.}
 \begin{align*}
    I_G(f,\gamma) &= \sum_{t\in G_{\gamma}\bk G/K} \sum_{u\in K/\{aI\}}  f((tu)^{-1}\gamma (tu))\\
                  & = \sum_{t\in G_{\gamma}\bk \HH_q} \frac{|K|}{|\{aI\}|} f(t^{-1}\gamma t)\\
    & = \frac{q^3-1}{q-1} \lb( \sum_{\substack{u,v\in \FF_q\\u\neq 0}} f\lb(
\begin{bmatrix}
 -\frac{v}{u} & 1 & 0 \\
 \frac{1}{u} & 0 & 0 \\
 0 & 0 & 1 \\
\end{bmatrix}
\gamma
\begin{bmatrix}
 0 & u & 0 \\
 1 & v & 0 \\
 0 & 0 & 1 \\
\end{bmatrix} p_0
                \rb) + \sum_{\substack{u\in \FF_q^{\times}}} f\lb(
\begin{bmatrix}
 \frac{1}{u} & -\frac{1}{u} & 0 \\
 0 & 1 & 0 \\
 0 & 0 & 1 \\
\end{bmatrix}
\gamma
\begin{bmatrix}
 u & 1 & 0 \\
 0 & 1 & 0 \\
 0 & 0 & 1 \\
\end{bmatrix} p_0\rb)\rb) \\
    & = (q^2+q+1)\lb(\sum_{\substack{u,v\in \FF_q\\u\neq 0}}f\lb( 
\begin{bmatrix}
 a & 0 & -\frac{av}{u} \\
 0 & a & \frac{a}{u} \\
 0 & 0 & a \\
      \end{bmatrix} p_0\rb) + \sum_{\substack{u\in \FF_q^{\times}}}f\lb( 
\begin{bmatrix}
 a & 0 & \frac{a}{u} \\
 0 & a & 0 \\
 0 & 0 & a \\
      \end{bmatrix} p_0\rb)\rb)\\
      & = (q^2+q+1) \lb(\sum_{\substack{u,v\in \FF_q\\u\neq 0}}f\lb(-\frac{v}{u}+\delta^{2/3},\frac{1}{u}+\delta^{1/3}\rb) + \sum_{\substack{u\in \FF_q^{\times}}} f\lb(\delta^{2/3}+\frac{1}{u},\delta^{1/3}\rb) \rb)\\
      &=(q^2+q+1)\lb(Hf(I)-f(p_0)\rb)
\end{align*}

\item \textbf{Proof of Proposition 5.2.}
First, we check that if $m\in B$ such that $mx=y$ for $x,y\in B\bk \HH_q$, then $x=y$. Suppose there exists $m\in B$ such that\[
      m(\delta^{1/3},v_1\delta^{1/3}+\delta^{2/3}) = (\delta^{1/3},v_2\delta^{1/3}+\delta^{2/3}).
      \]
       Then we right away see that $c/a=1$ and so $v_1=v_2$. Finally, it is also easy to see that we cannot use $B$ to move elements between the two sets in the disjoint union of $B\backslash \HH_q$.
       
      Next, we check that for any arbitrary element $(\alpha,\beta) \in \HH_q$ (where $\alpha=\alpha_1+\alpha_2\delta^{1/3}+\alpha_3\delta^{2/3}$ and $\beta=\beta_1+\beta_2\delta^{1/3}+\beta_3\delta^{2/3}$), there exists $m\in B$ and $x\in B\bk \HH_q$ such that $mx=(\alpha,\beta)$. First suppose that $\beta_3 \neq 0$.
      \[\begin{bmatrix}
        \alpha_2-\alpha_3\frac{\beta_2}{\beta_3} & \alpha_3 & \alpha_1\\
        0 & \beta_3 & \beta_1\\
        0 & 0 & 1
      \end{bmatrix} \lb(\delta^{1/3}, \frac{\beta_2}{\beta_3}\delta^{1/3}+\delta^{2/3}\rb) = (\alpha,\beta)\]
      If $\beta_3 = 0$, then\[
      \begin{bmatrix}
       \alpha_3 & \alpha_2 & \alpha_1\\
        0 & \beta_2 & \beta_1\\
        0 & 0 & 1
      \end{bmatrix} \lb(\delta^{2/3}, \delta^{1/3}\rb) = (\alpha,\beta)\]

\item \textbf{Second Parabolic Term.}
\begin{align*}
    I_G(f,\gamma) &= \sum_{t\in G_{\gamma}\bk G/K} \sum_{u\in K/\{aI\}}  f((tu)^{-1}\gamma (tu))\\
    &= \sum_{t\in G_{\gamma}\backslash\HH_q}\frac{|K|}{|aI|}f(t^{-1}\gamma t)\\
    &= \frac{q^3-1}{q-1}\lb(f\lb(\begin{bmatrix}
      1 & 0 & 0\\
      0 & 1 & 0\\
      0 & 0 & 1
    \end{bmatrix}\gamma \begin{bmatrix}
      1 & 0 & 0\\
      0 & 1 & 0\\
      0 & 0 & 1
    \end{bmatrix} p_0\rb)+\sum_{v\in \FF_q} f\lb(\begin{bmatrix}
      -v & 1 & 0\\
      1 & 0 & 0\\
      0 & 0 & 1
    \end{bmatrix}\gamma\begin{bmatrix}
      0 & 1 & 0\\
      1 & v & 0\\
      0 & 0 & 1
    \end{bmatrix}p_0\rb)\rb)\\
    &= (q^2+q+1)\lb(f\lb(\begin{bmatrix}
      a & a & 0\\
      0 & a & a\\
      0 & 0 & a
    \end{bmatrix}p_0\rb)+\sum_{v\in \FF_q} f\lb(\begin{bmatrix}-av+a&-av^2&a\\ a&av+a&0\\ 0&0&a\end{bmatrix}p_0\rb)\rb)\\
    &= (q^2+q+1)\lb(f\lb(\delta^{2/3}+\delta^{1/3},\delta^{1/3}+1\rb)+\sum_{v\in \FF_q} f\lb((1-v)\delta^{2/3}-v^2\delta^{1/3}+1,\delta^{2/3}+(v+1)\delta^{1/3}\rb)\rb)
\end{align*}

\item \textbf{Third Parabolic Term.}
\[G_\gamma\bk\HH_q = \{(\delta^{1/3}+u\delta^{2/3}+v, r\delta^{2/3}+s): r\neq 0\} \sqcup \{(\delta^{2/3}+v,r\delta^{1/3}+s): r\neq 0\}.\]

 \begin{align*}
    I_G(f,\gamma) &= \sum_{t\in G_{\gamma}\bk G/K} \sum_{u\in K/\{aI\}}  f((tu)^{-1}\gamma (tu))\\
                  & = \sum_{t\in G_{\gamma}\bk \HH_q} \frac{|K|}{|\{aI\}|} f(t^{-1}\gamma t)\\
    & = \frac{q^3-1}{q-1} \lb( \sum_{\substack{u,v,r,s\in \FF_q\\r\neq 0}} f\lb(
\begin{bmatrix}
 0 & \frac{1}{r} & -\frac{s}{r} \\
 1 & -\frac{u}{r} & \frac{s u}{r}-v \\
 0 & 0 & 1 \\
\end{bmatrix}
\gamma
\begin{bmatrix}
 u & 1 & v \\
 r & 0 & s \\
 0 & 0 & 1 \\
\end{bmatrix} p_0
                \rb) + \sum_{\substack{v,r,s\in \FF_q\\r\neq 0}} f\lb(
\begin{bmatrix}
 1 & 0 & -v \\
 0 & \frac{1}{r} & -\frac{s}{r} \\
 0 & 0 & 1 \\
\end{bmatrix}
\gamma
\begin{bmatrix}
 1 & 0 & v \\
 0 & r & s \\
 0 & 0 & 1 \\
\end{bmatrix} p_0
                \rb)\rb) \\
    \begin{split}
    & = (q^2+q+1)\Biggl(\sum_{\substack{u,v,r,s\in \FF_q\\r\neq 0}}f\lb( 
\begin{bmatrix}
 \frac{a (r+u)}{r} & \frac{a}{r} & \frac{a (s+v)-b s}{r} \\
 -\frac{a u^2}{r} & \frac{a(r-u)}{r} & -\frac{-a r v+a s u+a u v+b r v-b s u}{r} \\
 0 & 0 & b \\
      \end{bmatrix} p_0\rb) 
      \\ &\qquad + \sum_{\substack{v,r,s\in \FF_q\\ r\neq 0}}f\lb(
\begin{bmatrix}
  a & 0 & v (a-b) \\
 \frac{a}{r} & a & \frac{a (s+v)-b s}{r} \\
 0 & 0 & b \\
      \end{bmatrix} p_0\rb)\Biggr).
 \end{split}
\end{align*}

Employ the following change of variables in the two sums:

$$\begin{bmatrix}\tilde{v}\\ \tilde{s}\end{bmatrix}=\frac{1}{r}\begin{bmatrix}
  a&(a-b)\\(a-b)r-au&(b-a)u
\end{bmatrix}\begin{bmatrix}
  v\\s
\end{bmatrix}$$

$$\begin{bmatrix}
  v^*\\s^*
\end{bmatrix}=\begin{bmatrix}
  a-b&0\\
  a/r&(a-b)/r
\end{bmatrix}\begin{bmatrix}
 v\\s
\end{bmatrix}$$

The determinants are both $(a-b)^2/r\neq 0$ hence the transformations are invertible. It follows that the above sums reduce to horocycle transforms of parabolic conjugacy classes in $\GL_2(\FF_q)$.

$$I_G(f,\gamma)=(q^2+q+1)Hf\lb(\begin{bmatrix}
  a/b&1\\0&a/b
\end{bmatrix}\rb)$$

\item \textbf{Proof of Proposition 6.1.}

 We first show that every $G_\gamma$-orbit on $\HH_q$ contains at least one element of the above form. Let $z\in\HH_q$ and write:
 
 $$z=\begin{bmatrix}
   a&b&c\\d&e&f\\0&0&1
 \end{bmatrix}p_0$$

We want to find $r,s\in\FF_q$ not both zero and $t\in\FF_q^\times$ such that:

$$\begin{bmatrix}
  r&s\xi&0\\s&r&0\\0&0&t
\end{bmatrix}\begin{bmatrix}
  a&b&c\\d&e&f\\0&0&1
\end{bmatrix}=\begin{bmatrix}v&u&x\\0&1&y\\0&0&1\end{bmatrix}$$

for some $v\in\FF_q^\times$ and $u,x,y\in\FF_q$. This amounts to solving the following set of equations for $s,r,t$ over $\FF_q$:

\begin{align*}
    sa+rd&=0\\
    sb+re&=1\\
    t&=1
\end{align*}

By assumption, the determinant $\begin{vmatrix}a&b\\d&e\end{vmatrix}$ is nonzero, therefore the above system admits a unique solution for $s,r,t$ over $\FF_q$, moreover, $s$ and $r$ cannot be simultaneously zero.

We now need to show that the domain does not contain any orbit repetitions. Suppose that:

$$\begin{bmatrix}
  r&s\xi&0\\s&r&0\\0&0&t
\end{bmatrix}\begin{bmatrix}v&u&x\\0&1&y\\0&0&1\end{bmatrix}=\begin{bmatrix}v'&u'&x'\\0&1&y'\\0&0&1\end{bmatrix}$$

Writing up the above equations again we get:

\begin{align*}
    sv&=0\\
    su+r&=1\\
    t&=1
\end{align*}

As $v\neq 0$ we find that $s=0$, $r=1$ and $t=1$ so that the first matrix on the left-hand side is the identity, which implies that $x=x'$, $y=y'$, $u=u'$ and $v=v'$, concluding the proof.

\item \textbf{Second Elliptic Term.}

$$I_G(f,\gamma)=\frac{(q-1)(q^2-1)}{(p-1)(p^2-1)}\cdot\frac{q^3-1}{q-1}\sum_{\substack{x,y,u\in\FF_q\\v\in\FF_q^\times}}f\lb(\begin{bmatrix}
  v&u&x\\0&1&y\\0&0&1
\end{bmatrix}^{-1}\begin{bmatrix}
  k&l\xi&0\\l&k&0\\0&0&m
\end{bmatrix}\begin{bmatrix}
  v&u&x\\0&1&y\\0&0&1
\end{bmatrix}p_0\rb)=$$

$$=\frac{(q^3-1)(q^2-1)}{(p-1)(p^2-1)}\sum_{\substack{u\in\FF_q\\v\in\FF_q^\times}}\sum_{x,y\in\FF_q}f\lb(\begin{bmatrix}-lu+k&\frac{l\xi-lu^2}{v}&\frac{x\left(-lu+k\right)+y\left(l\xi-ku\right)+m\left(uy-x\right)}{v}\\ vl&lu+k&lx+ky-my\\ 0&0&m\end{bmatrix}p_0\rb)$$

Set:

\begin{align*}
    \tilde{x}&=\frac{(k-m-lu)}{v}\cdot x+\frac{(m-k)u+l\xi}{v}\cdot y\\
    \tilde{y}&=lx+(k-m)y
\end{align*}

The determinant corresponding to this change of variables is $\lb((k-m)^2-l^2\xi\rb)/v$. It could only be zero if $k-m=0$ and $l=0$ since $\xi$ is not a square, but then $\gamma$ would be similar to a diagonal matrix, contradicting our assumption. Thus the orbital sum can be simplified as:

$$I_G(f,\gamma)=\frac{(q^3-1)(q^2-1)}{(p-1)(p^2-1)}\sum_{\substack{u\in\FF_q\\v\in\FF_q^\times}}\sum_{\tilde{x},\tilde{y}\in\FF_q}f\lb(\begin{bmatrix}-lu+k&\frac{l\xi-lu^2}{v}&\tilde{x}\\ vl&lu+k&\tilde{y}\\ 0&0&m\end{bmatrix}p_0\rb)$$

Finally, notice that the upper-left $2\times 2$ block is similar to the elliptic matrix $\Big[\begin{smallmatrix}k&l\xi\\l&k\end{smallmatrix}\Big]$ over $\GL_2(\FF_q)$. Hence we obtain
\[I_G(f,\gamma)=\frac{(q^3-1)(q^2-1)}{(p-1)(p^2-1)}Hf\lb(\begin{bmatrix}
  k/m&l\xi/m\\l/m&k/m
\end{bmatrix}\rb).\]

\end{itemize}


\end{document}